\documentclass[12pt,reqno]{amsart}
\usepackage[margin=1in]{geometry}       
\geometry{letterpaper}                   
\usepackage{tikz}
\usetikzlibrary {arrows.meta}

\usepackage{graphicx}
\usepackage{amssymb}
\usepackage{aliascnt}
\usepackage{doi}
\usepackage{epstopdf}
\DeclareGraphicsRule{.tif}{png}{.png}{`convert #1 `dirname #1`/`basename #1 .tif`.png}

\usepackage{pgfplots}
\usepgflibrary {shadings} 


\newcommand\myarrowL[2]{
\draw[arrows = {-Stealth[length=7pt, inset=4pt, width=7pt]},  line width=0.7pt] #1 to[bend left] #2;}


\newcommand{\arxiv}[1]{%
\href{https://arxiv.org/abs/#1}{ArXiv:#1}}

\usepackage[super]{nth}

\usepackage{psfrag}

\newtheorem{theorem}{Theorem}[section]

\newaliascnt{lemma}{theorem}
\newtheorem{lemma}[lemma]{Lemma}
\aliascntresetthe{lemma}

\newaliascnt{proposition}{theorem}
\newtheorem{proposition}[proposition]{Proposition}
\aliascntresetthe{proposition}

\newaliascnt{corollary}{theorem}

\aliascntresetthe{corollary}

\newaliascnt{conjecture}{theorem}

\aliascntresetthe{conjecture}


\newaliascnt{openQ}{theorem}

\aliascntresetthe{openQ}

\newaliascnt{quest}{theorem}

\aliascntresetthe{quest}

\newaliascnt{questx}{conjx}

\aliascntresetthe{questx}

\theoremstyle{definition}

\newaliascnt{defn}{theorem}

\aliascntresetthe{defn}

\newaliascnt{example}{theorem}

\aliascntresetthe{example}

\newaliascnt{rem}{theorem}

\aliascntresetthe{rem}

\makeatletter
\def\tagform@#1{\maketag@@@{\ignorespaces#1\unskip\@@italiccorr}}
\let\orgtheequation\theequation
\def\theequation{(\orgtheequation)}
\makeatother
\def\equationautorefname~{}


\newcommand{\B}{{\mathbb B}}
\newcommand{\C}{{\mathbb C}}
\newcommand{\D}{\mathbb{D}}
\newcommand{\e}{\varepsilon}
\newcommand{\R}{{\mathbb R}}
\newcommand{\Sp}{{\mathbb S}}
\DeclareMathOperator{\dist}{dist}

\begin{document}
	\title[Two disks maximize third Robin eigenvalue]{Two disks maximize the third Robin eigenvalue: positive parameters}

\keywords{Robin, Neumann, vibrating membrane, conformal mapping}
\subjclass[2010]{\text{Primary 35P15. Secondary 30C70}}

	\begin{abstract}
		The third eigenvalue of the Robin Laplacian on a simply-connected planar domain of given area is bounded above by the corresponding eigenvalue of a disjoint union of two equal disks, for Robin parameters in $[-4\pi,4\pi]$. This sharp inequality was known previously only for negative parameters in $[-4\pi,0]$, by Girouard and Laugesen. Their proof fails for positive Robin parameters because the second eigenfunction on a disk has non-monotonic radial part. This difficulty is overcome for parameters in $(0,4\pi]$ by means of a degree-theoretic approach suggested by Karpukhin and Stern that yields suitably orthogonal trial functions.   
	\end{abstract}
	
\author[]{Hanna N. Kim and Richard S. Laugesen}
\address{\hspace*{-1cm} Department of Mathematics, University of North Carolina, Chapel Hill, NC 27599, U.S.A.}
\email{hannakim@unc.edu}
\address{\hspace*{-1cm} Department of Mathematics, University of Illinois, Urbana,
	IL 61801, U.S.A.}
\email{Laugesen@illinois.edu}

	\maketitle
	

\section{\bf Introduction} \label{sec:intro}

The Robin eigenvalue problem for a bounded Lipschitz planar domain $\Omega \subset \R^2$ consists of finding eigenvalues $\lambda=\lambda(\Omega;\alpha/L)$ of the Laplacian for which an eigenfunction $u \not \equiv 0$ exists satisfying
\[
\begin{split}
\Delta u + \lambda u & = 0 \quad \text{in $\Omega$,} \\
\partial_\nu u + \frac{\alpha}{L} u & = 0 \quad \text{on $\partial \Omega$,} 
\end{split}
\]
where $\partial_\nu u$ is the outward normal derivative, $L$ is the length (perimeter) of the boundary $\partial \Omega$, and $\alpha \in \R$ is the Robin parameter. We choose in the boundary condition to divide $\alpha$ by $L$ in order to match the inverse length scale of the normal derivative $\partial_\nu u$. Consequently, the $\alpha$ parameter below in \autoref{th:main} belongs to a universal interval that is independent of the shape of $\Omega$ and of its area $A$. 

Physically, Robin eigenvalues represent rates of decay to equilibrium for heat in a partially insulated region, or frequencies of vibration for a membrane under elastically restoring boundary conditions. 

Sharp upper bounds are known for the first three Robin eigenvalues. After multiplying the eigenvalue by the area to obtain a scale invariant quantity, $\lambda_1(\Omega; \alpha/L)A$ is known to be maximal for a degenerate rectangle, for each $\alpha \in \R$; see \cite[Theorem A]{FreiLauS2020}. The second eigenvalue $\lambda_2(\Omega; \alpha/L)A$ is maximal among simply-connected domains for the disk whenever $\alpha \in [-2\pi,2\pi]$, as shown by Freitas and Laugesen \cite[Theorem B]{FreiLauS2020}. While this interval of $\alpha$-values could perhaps be enlarged, the disk certainly cannot be the maximizer for all $\alpha$, because the Robin spectrum converges to the Dirichlet spectrum as $\alpha \to \infty$ and the lowest Dirichlet eigenvalue can be made arbitrarily large by considering thin domains of fixed area.  

For the third Robin eigenvalue, Girouard and Laugesen \cite[Theorem 1.1]{GirLau2021} proved a sharp upper bound when $\alpha$ lies in the interval $[-4\pi,0]$: they proved $\lambda_3(\Omega;\alpha/L) A$ is maximized by a disjoint union of two equal disks, in the sense that this maximum value is approached in the limit as a simply-connected domain degenerates to a union of disks. 

The goal of this paper is to extend Girouard and Laugesen's upper bound to handle positive Robin parameters in the range $\alpha \in (0,4\pi]$, thus resolving one of their open problems \cite[p.{\,}2713]{GirLau2021}. Write $\D = \{ z\in\C : |z|<1 \}$ for the unit disk, so that $\D\sqcup\D$ represents a disjoint union of two copies of the disk. 
\begin{theorem}[Third Robin eigenvalue is maximal for two disks] \label{th:main}
Fix $\alpha\in[-4\pi,4\pi]$. If $\Omega\subset\R^2$ is a simply-connected bounded Lipschitz domain whose boundary is a Jordan curve then
\[
  \lambda_3(\Omega;\alpha/L) A <\lambda_3(\D\sqcup\D;\alpha/4\pi) 2\pi.
\]
Equality is attained asymptotically for the domain $\Omega_\e = (\D - 1 + \e)\cup (\D + 1 - \e)$ that as $\e \to 0$ approaches the disjoint union $(\D-1)\cup (\D+1)$ of two disks.
\end{theorem}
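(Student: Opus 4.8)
The plan is to combine conformal transplantation with the variational (min-max) characterization $\lambda_3(\Omega;\alpha/L)=\min_{\dim V=3}\max_{0\ne v\in V}R_\Omega[v]$, where $R_\Omega$ is the Robin Rayleigh quotient: exhibit, via the Riemann map, a three-dimensional trial space on the unit disk whose transplanted Rayleigh quotients are all at most the target value $\lambda_3(\D\sqcup\D;\alpha/4\pi)\cdot 2\pi/A$. For $\alpha\in[-4\pi,0]$ this inequality is \cite[Theorem 1.1]{GirLau2021}, which I would cite; so assume $\alpha\in(0,4\pi]$. After rescaling so that $A=2\pi$, fix a conformal homeomorphism $\phi\colon\overline{\D}\to\overline{\Omega}$ --- the Riemann map, extended to the boundary by Carath\'eodory's theorem since $\partial\Omega$ is a Jordan curve --- and record the transplantation identities for $v=u\circ\phi^{-1}$: conformal invariance of the planar Dirichlet integral gives $\int_\Omega|\nabla v|^2\,dA=\int_\D|\nabla u|^2\,dA$, while the area and arclength changes of variables give $\int_\Omega v^2\,dA=\int_\D u^2|\phi'|^2\,dA$ and $\int_{\partial\Omega}v^2\,ds=\int_{\partial\D}u^2|\phi'|\,ds$, with $L=\int_{\partial\D}|\phi'|\,ds$. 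The task thereby becomes the choice of functions $u$ on $\D$ together with control of the conformal weight $|\phi'|$ against the normalizations $\int_{\partial\D}|\phi'|\,ds=L$ and $\int_\D|\phi'|^2\,dA=2\pi$.

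The natural trial functions come from the second eigenfunction of $(\D;\alpha/4\pi)$, namely $R(r)\cos\theta$ and $R(r)\sin\theta$, since $\lambda_3(\D\sqcup\D;\alpha/4\pi)=\lambda_2(\D;\alpha/4\pi)$ (the spectrum of a disjoint union is the union of spectra with multiplicities, and $\lambda_1(\D;\cdot)<\lambda_2(\D;\cdot)$). Here the radial profile $R$ solves a Bessel-type Robin boundary value problem; because $\alpha>0$ the eigenvalue exceeds $(j_{1,1}')^2$, so $R$ increases and then decreases on $[0,1]$ and the pointwise bound $R(r)\le R(1)$ that drives the negative-parameter proof is unavailable. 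To build an admissible three-dimensional trial space in spite of this I would follow the degree-theoretic scheme of Karpukhin and Stern: consider a compactified family of M\"obius normalizations $\phi_a=\phi\circ m_a$, where $m_a(z)=(z+a)/(1+\bar a z)$ and $a\in\D$, together with an interpolation parameter that controls the degeneration as $a\to\partial\D$, and form an auxiliary continuous map $\Phi$ from this compact parameter space into $\R^2$ recording the obstruction to the transplanted trial Rayleigh quotients obeying the bound --- an obstruction that, when $R$ is monotone, is eliminated by the classical center-of-mass choice of $\phi$. The heart of this step is to show that $\Phi$ is boundary-nonvanishing and has nonzero topological degree, hence admits an interior zero $p_0$; the trial space attached to $p_0$ is then admissible for $\lambda_3(\Omega;\alpha/L)$.

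With $p_0$ fixed, the remaining step --- which I expect to be the main analytic obstacle --- is the Rayleigh-quotient estimate itself: show that the resulting trial functions have transplanted Rayleigh quotient at most $\lambda_3(\D\sqcup\D;\alpha/4\pi)\cdot 2\pi/A=\lambda_2(\D;\alpha/4\pi)$. The gradient term equals the disk value by conformal invariance, so everything reduces to bounding the weighted boundary integral $\tfrac{\alpha}{L}\int_{\partial\D}R(1)^2|\phi'|\,ds$ from above and the weighted area integral $\int_\D R(r)^2|\phi'|^2\,dA$ from below, measured against $\int_{\partial\D}|\phi'|\,ds=L$ and $\int_\D|\phi'|^2\,dA=2\pi$, but now without the lost monotonicity of $R$. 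The tools available are the subharmonicity of $\log|\phi'|$ and the fact that $|\phi'|^2$ is the squared modulus of a holomorphic function, so that its circular means are nondecreasing in the radius; combining these with the explicit profile of $R$ --- perhaps after splitting the radial interval at the point where $R$ is maximal --- should force the required inequality, but keeping the argument robust to the precise shape of $R$ is the delicate point.

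For the asymptotic statement I would compute directly on $\Omega_\e=(\D-1+\e)\cup(\D+1-\e)$: as $\e\to0$ the two unit disks become tangent, the overlap lens shrinks to a point, so $A_\e\to2\pi$ and $L_\e\to4\pi$, while a Neumann-bracketing / domain-monotonicity argument across the thin neck shows $\lambda_3(\Omega_\e;\alpha/L_\e)\to\lambda_3(\D\sqcup\D;\alpha/4\pi)$, whence $\lambda_3(\Omega_\e;\alpha/L_\e)A_\e\to\lambda_3(\D\sqcup\D;\alpha/4\pi)\,2\pi$. Strictness of the inequality for every admissible $\Omega$ would follow by tracking equality in the estimates above: equality should force $|\phi'|$ to be constant, hence $\phi$ affine and $\Omega$ a disk, and a short computation --- using that $\sigma\mapsto\lambda_2(\D;\sigma)$ is concave with $\lambda_2(\D;0)=(j_{1,1}')^2>0$, so that $\lambda_2(\D;\alpha/2\pi)<2\lambda_2(\D;\alpha/4\pi)$ --- shows that a single disk of area $2\pi$ already has $\lambda_3 A$ strictly below the target. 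Assembling these ingredients yields \autoref{th:main}.
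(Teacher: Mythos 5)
Your proposal correctly identifies the high-level strategy — conformal transplantation plus a Karpukhin--Stern-style degree argument — but it misidentifies where the new work lies and is missing the structural ingredients that make the degree argument go through.

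\textbf{Misplaced difficulty.} You flag the Rayleigh-quotient estimate as ``the main analytic obstacle'' for $\alpha>0$, since the radial profile $g$ (your $R$) is no longer monotone. But that estimate is \emph{not} where monotonicity was used: Girouard and Laugesen's comparison of the transplanted quotient with $\lambda_2(\D;\alpha/4\pi)$ (their Lemmas 4.3, 6.1, and Section 7) already holds verbatim for all $\alpha\in[-4\pi,4\pi]$, with the restriction $\alpha\le 4\pi$ coming from an explicit inequality of Freitas--Laugesen and having nothing to do with the shape of $g$. Where monotonicity enters is in their \emph{orthogonality} step: for $\alpha\le 0$ the monotone $g$ lets one prove, for each cap, that a unique M\"obius parameter kills $\langle u,f_1\rangle$, and then a separate argument handles $\langle u,f_2\rangle$. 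For $\alpha>0$ that two-step scheme collapses and must be replaced. Your plan to re-derive the Rayleigh estimate via subharmonicity of $\log|\phi'|$ is therefore unnecessary — and it is also unlikely to be robust — whereas the orthogonality gap, which you treat lightly, is the real content.

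\textbf{Parameter count and the fold/cap structure.} You propose a family indexed by a M\"obius parameter $a\in\D$ plus one ``interpolation parameter,'' producing a map $\Phi$ into $\R^2$. That count is off. The orthogonality conditions are $\langle u,f_1\rangle_{L^2}=\langle u,f_2\rangle_{L^2}=0$ where $u$ is complex-valued and $f_1,f_2$ are real — two complex, i.e.\ four real equations. The paper accordingly builds a \emph{four}-parameter family $u_{w,C}=v\circ M_w\circ G_C\circ F_C\circ B$, with $w\in\overline\D$ a M\"obius parameter and $C=C_{p,t}$ ($p\in\Sp^1$, $t\in[0,1)$) a hyperbolic cap, combined through a fold map $F_C$ and a conformal cap map $G_C$. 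The parameter space $(\overline\D\times\Sp^1)/\!\sim$ is topologically $\Sp^3$, and the associated vector field $V$ maps into $\C^2\simeq\R^4$. Without the fold/cap structure your three parameters cannot match the four conditions, and there is no reason a zero of $\Phi$ exists.

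\textbf{The missing reflection symmetry, and the actual shape of the degree argument.} Your sketch asserts that $\Phi$ is ``boundary-nonvanishing and has nonzero topological degree.'' In fact the argument is by contradiction and it hinges entirely on a reflection-symmetry identity that you never mention. After normalizing $V$ to a sphere map $W_t:\Sp^3\to\Sp^3$, one shows (i) $W_1$ (the limit $t\to 1$ where the fold disappears) is non-surjective — it depends only on the two-dimensional $w$-parameter — hence has degree $0$; and (ii) $W_0$ satisfies $W_0(R_b(a),-b)=(R_b\times R_b)W_0(a,b)$ because the fold map, cap map, M\"obius map, and eigenfunction $v$ all conjugate correctly under the reflection $R_b$. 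Karpukhin--Stern's lemma (or the variant proved in the paper's \autoref{topdeg}) then forces $\deg W_0=1$, contradicting homotopy invariance if $V$ never vanished. Without establishing the reflection symmetry — which requires the explicit conjugation identities $M_{R_p(w)}=R_p\circ M_w\circ R_p$, $G_{-b}=R_b\circ G_b\circ R_b$, $R_b\circ F_{-b}=F_b$, $v\circ R_b=R_b\circ v$ — the degree claim is unsupported, and in general a map $\Sp^3\to\Sp^3$ built from eigenfunction pairings could perfectly well have degree zero.

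In short: keep the degree-theoretic idea, but redirect it to the orthogonality problem rather than the Rayleigh estimate, enlarge the trial family to four parameters using the fold and cap maps, and prove the reflection symmetry identity — that is what actually unlocks the positive-$\alpha$ range.
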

The third eigenvalue of the disjoint union $\D\sqcup\D$ is simply the second eigenvalue of one of the disks, and so the theorem says 
\[
\lambda_3(\Omega;\alpha/L) A < \lambda_2(\D;\alpha/4\pi) 2\pi ,
\]
where the eigenvalue on the right can be computed explicitly in terms of Bessel functions on the disk, as explained in \autoref{sec:diskproblem}. 

By scale invariance, the conclusion of the theorem can alternatively be rephrased as  
\[
\lambda_3(\Omega;\alpha/L(\Omega))<\lambda_3(\Omega^{\star\star};\alpha/L(\Omega^{\star\star})) 
\]
where $\Omega^{\star\star}$ is the union of two disjoint disks each having half the area of $\Omega$. 

As a remark, \autoref{th:main} can fail when $\alpha>65.4$, because $\lambda_2(\D;\alpha/4\pi) 2\pi<\alpha$ by \cite[pp.{\,}1039--1040]{FreiLauS2020} when $\alpha/2>32.7$, while by \cite[(4.1)]{FreiLauS2020} we have $\alpha \simeq \lambda_1(\Omega;\alpha/L) A \leq \lambda_2(\Omega;\alpha/L) A$ for any sufficiently long and thin rectangle $\Omega$. 

\subsection*{Plan of the paper}
The next section explains how to modify Girouard and Laugesen's approach in order to obtain \autoref{th:main} for $\alpha \in (0,4\pi]$. The key change is to use a degree theoretic lemma due to Karpukhin and Stern \cite[Lemma 4.2]{KS24} for mappings between spheres. Those authors wrote in reference to the theorem of Girouard and Laugesen for $\alpha \in [-4\pi,0]$ that ``We believe that our version of the argument allows one to extend the range of Robin parameters for which the results'' hold \cite[p.{\,}4079]{KS24}. The current paper  pursues their suggestion and shows that the crucial reflection-symmetry hypothesis \eqref{eq:reflectionsymmetry} indeed holds, enabling the proof to proceed. 

\autoref{sec:mobiuscaps} recalls properties of M\"{o}bius transformations and hyperbolic caps. From those objects the family of trial functions is constructed: see formula \eqref{eq:trialfn} and \autoref{fig:Trial}. 

To show in \autoref{sec:orthogonality} that at least one trial function in the family is orthogonal to the first two Robin eigenfunctions of $\Omega$, the reflection symmetry hypothesis for Karpukhin and Stern's degree theory lemma is verified. In \autoref{sec:degree} we present an alternative proof of Kim's variant of their lemma \cite{K24}. We simplify Kim's proof by introducing a homotopy to avoid certain calculations, thus bringing out more clearly the role played by the reflections.  

\autoref{sec:diskproblem} collects background facts on the Robin spectrum of the disk. 
        
\subsection*{Literature on upper bounds for eigenvalues of the Laplacian}

The maximization of individual eigenvalues of the Laplacian began with Szeg\H{o} \cite{S54}, who proved that among simply-connected planar domains of given area, the second Neumann ($\alpha=0$) eigenvalue is largest for the disk. Weinberger \cite{W56} extended the result by a different method to  domains in all dimensions. An excellent source for these classical results and later developments is the survey book edited by Henrot \cite{H17}. Notable open problems for Robin eigenvalues can also be found in Laugesen \cite{L19}. 

The Neumann inequalities of Szeg\H{o} and Weinberger were extended to the second Robin eigenvalue by Freitas and Laugesen \cite{FreiLauS2020,FreiLauW2021}. For the third Neumann eigenvalue, the breakthrough was achieved by Girouard, Nadirashvili and Polterovich \cite{GirNadPol21}, finding for simply connected planar domains that the maximizer is the union of two equal disks. Their result was generalized by Bucur and Henrot \cite{BH19} to all domains and higher dimensions. 

Meanwhile, a parallel line of research developed eigenvalue bounds on closed surfaces and manifolds, starting with Hersch's generalization of Szeg\H{o}'s approach to metrics on the $2$-sphere \cite{H70}, showing that the round sphere maximizes $\lambda_2$. The most relevant work for our current purposes is by Petrides \cite{P14}, Karpukhin and Stern \cite{KS24} and Kim \cite{K22,K24}. Petrides got upper bounds on $\lambda_3$ for spheres of arbitrary dimensions, and in the course of that work obtained an elegant degree theory lemma for ``reflection symmetric'' maps between spheres \cite[claim 3]{P14}. 

Karpukhin and Stern found a more general result \cite[Lemma 4.2]{KS24} involving two reflections rather than one. Kim applied Petrides's lemma in her work for eigenvalues on spheres \cite{K22} and applied a variant of Karpukhin and Stern's lemma to eigenvalues on projective space \cite{K24}. In the latter paper, she provided an alternative proof that yields more information about the value of the degree of the sphere mapping.

\section{\bf Proof of \autoref{th:main}}
\label{sec:mainproof}

The function spaces $L^2(\Omega;\C)$ and $H^1(\Omega;\C)$ consist of complex-valued functions, although for the sake of brevity we will often omit the $\C$ from the notation. A conformal map is a diffeomorphism that is holomorphic in both directions.

The Robin eigenvalues form a sequence $\lambda_1 \leq \lambda_2 \leq \lambda_3 \leq \cdots \to \infty$ in which each eigenvalue is repeated according to its multiplicity. The variational characterization of the third eigenvalue says that it equals the minimum of the Rayleigh quotient taken over trial functions orthogonal to the first two eigenfunctions:
\begin{align*}
	& \lambda_3(\Omega;\alpha/L) \\ 
	& =\min\left\{\frac{\int_{\Omega}|\nabla u|^2\,dA+(\alpha/L)\int_{\partial\Omega}|u|^2\,ds}{\int_{\Omega}|u|^2\,dA} \,:\,u\in H^1(\Omega; \C) \setminus \{ 0 \}, \int_{\Omega}u f_j\,dA=0, \ j=1,2 \right\} \notag
\end{align*}
where $f_1$ and $f_2$ are $L^2$-orthonormal real-valued eigenfunctions corresponding to the eigenvalues $\lambda_1(\Omega;\alpha/L)$ and $\lambda_2(\Omega;\alpha/L)$, and $dA$ is the area element. The trial function $u$ may be complex-valued. 

Thus to prove \autoref{th:main}, one wants to construct a suitable trial function $u$ and show that when it is substituted into the variational characterization, the desired upper bound is obtained on the third eigenvalue. 

The method of Girouard and Laugesen \cite[Section 7]{GirLau2021} for $\alpha \in [-4\pi,0]$ continues to hold verbatim for $\alpha \in [-4\pi,4\pi]$, except with one minor and one major alteration. The major change is that when $\alpha \in (0,4\pi]$, a new method is needed to prove existence of a trial function $u$ orthogonal to the Robin eigenfunctions $f_1$ and $f_2$. Girouard and Laugesen take a two-step approach to proving orthogonality. In terms of the notation developed in \autoref{sec:mobiuscaps} below, their first step shows for each each hyperbolic cap $C$ that a unique M\"{o}bius parameter $w$ exists that ensures orthogonality of $u$ against $f_1$ (their Lemma 5.1 and (5.3)). Second, they show for some $C$ that orthogonality also holds against $f_2$ (their Proposition 5.5). The hypothesis of their first step fails when $\alpha>0$ because the second Robin eigenfunction of a disk has nonmonotonic radial part $g$; see \autoref{Robin_g_second} in \autoref{sec:diskproblem}. 

To replace the orthogonality component of their argument, in the current paper we rely on \autoref{vanish2} below to generate a trial function that is orthogonal to $f_1$ and $f_2$. The proof of that proposition follows the one-step approach of Karpukhin and Stern. 

The proposition generates a trial function $u_{w,C}$ that depends on a point $w \in \D$ and a parameter $t \in [0,1]$. When $0 \leq t < 1$, Case 2 in the proof of Girouard and Laugesen \cite[Sections 7]{GirLau2021} should be followed. When $t=1$, one follows Case 1 with the minor change that $v \circ B$ in their proof is replaced by $v \circ M_w \circ B$. 

The final claim in the theorem, about asymptotic equality as $\e \to 0$, was handled for all $\alpha$ by Girouard and Laugesen \cite[Section 8]{GirLau2021}. \qed

\subsubsection*{Remark} The orthogonality construction in \autoref{vanish2} holds for all $\alpha$. The restriction $\alpha \geq -4\pi$ in \autoref{th:main} is necessary elsewhere in the proof of the theorem, on \cite[pp.{\,}2732-2733]{GirLau2021}, to ensure nonnegativity of $\lambda_2(\D;\alpha/4\pi)$ and hence to justify a certain inequality in the proof. The restriction $\alpha \leq 4\pi$ arises on \cite[pp.{\,}2732--2733]{GirLau2021} when Girouard and Laugesen use their Lemmas 4.3 and 6.1 to compare the $L^2$ norm of the trial function on $\Omega$ with the norm of the eigenfunction on $\D$. Those lemmas, which ultimately rely on \cite[formula (7.5)]{FreiLauS2020}, both require $\alpha \leq 4\pi$.

\section{\bf M\"obius transformations, hyperbolic caps, fold and cap maps, and trial functions}
\label{sec:mobiuscaps}
		
We follow Girouard and Laugesen's construction in \cite{GirLau2021} of a $4$-parameter family of complex-valued trial functions for the Rayleigh quotient of $\lambda_3(\Omega;\alpha/L)$. The four parameters will provide enough degrees of freedom to ensure at least one of the trial functions is orthogonal to the first two real-valued eigenfunctions $f_1$ and $f_2$. 
	
Two of the four parameters come from a family of M\"{o}bius transformations of the disk and two more from a family of hyperbolic caps inside the disk. We proceed to recall the needed formulas and reflection properties. 
	
\subsection*{M\"obius transformations}
Given $w \in \overline{\D}$, let
\[
M_w(z) = \frac{z+w}{z\overline{w}+1} , \qquad z \in \D .
\]
When $w\in\D$, this $M_w$ is a M\"{o}bius self-map of the disk with $M_w(0)=w$. When $w \in \partial\D$ it is a constant map, with $M_w(z)=w$ for all $z\in\D$. 

Define $R_p:\C\to\C$ to be reflection across the line through the origin that is perpendicular to $p \in \Sp^1 = \partial \D$. The following conjugation relation comes from \cite[formula (2.4)]{GirLau2021}:  
\begin{equation} \label{eq:Mobiusconjgen}
M_{R_p(w)} = R_p \circ M_w \circ R_p , \qquad w \in \D .
\end{equation}

\subsection*{Hyperbolic caps }
For $p \in \Sp^1$, let $C_{p,0}$ be the half-disk ``centered'' in direction $p$:
$$C_{p,0}=\{z\in \overline{\D} \,:\,z\cdot p\geq 0\},$$
where in this definition we think of $z$ and $p$ as vectors in $\R^2$. Define a hyperbolic cap $C=C_{p,t}\subset \overline{\D}$ by
\[
C_{p,t}=M_{-pt}(C_{p,0}), \qquad p \in \Sp^1 , \quad t \in (-1,1) ,
\]
 as illustrated in \autoref{fig:capdef}. The complement of $C$ is the cap $C^\star=C_{-p,-t}$.  
\begin{figure}
\begin{center}
\begin{tikzpicture}[scale=2.0]

\draw (0,0) circle (1.0) node[right] {\hspace*{2pt}\large $0$};

\filldraw  (-0.55,0.45) circle (0pt) node[right] {\huge $C$};

\filldraw  (0,0) circle (0.8pt); 
\filldraw  (0.7071,0.7071) circle (0.8pt) node[above] {\large \hspace*{12pt}$p$};
\filldraw  (0.7071-1,0.7071-1) circle (0.8pt) node[below] {\large $-pt$\hspace*{18pt}};

\draw[dotted,thick] (0.7071,0.7071) -- (0.7071-1.0,0.7071-1.0);

\draw[red,thick,domain=0:90] plot ({cos(\x)-1}, {sin(\x)-1});

\def\radelta{0.075}
\draw (-\radelta,-1) -- (-\radelta,-1+\radelta) -- (-0.1*\radelta,-1+\radelta);
\draw (-1,-\radelta) -- (-1+\radelta,-\radelta) -- (-1+\radelta,-0.1*\radelta);

\end{tikzpicture}
\end{center}
\caption{\label{fig:capdef} The hyperbolic cap $C=C_{p,t}$ is the image of the half-disk $C_{p,0}$ under the M\"obius transform $M_{-pt}$. As shown in the diagram, positive $t$ values correspond to caps larger than a half-disk.}
\end{figure}
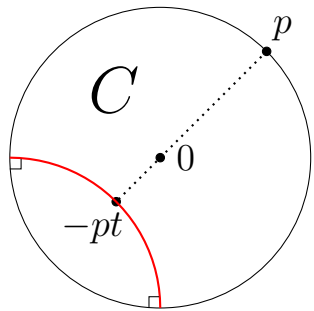

The hyperbolic reflection $\tau_C=\tau_{p,t}: \overline{\D} \to \overline{\D}$ associated with $C$ is obtained by pulling back to the half-disk, reflecting, and then pushing out again:  
\[
\tau_{p,t}=M_{-pt}\circ R_p\circ M_{pt}.
\]
This reflection $\tau_C$ takes $C$ to $C^\star$ and vice versa. 

This paper uses only the caps $C_{p,t}$ with $t \in [0,1)$, that is, caps larger than a half-disk. It will be important later that $C_{p,t}$ expands to the full disk as $t\to 1$.

\subsection*{Fold map}
Define the ``fold map'' $F_C:\overline{\D}\to C$ by
\[
F_C(z) =
\begin{cases}
z & \text{if\ } z \in C, \\
\tau_C(z) & \text{if\ } z \in C^\star .
\end{cases}
\]
We interpret $F_C$ as ``folding'' the disk onto the cap $C$ across the boundary between $C$ and its complement $C^\star$. The fold map is two-to-one except on the common boundary. Clearly $F_C(z)$ depends continuously on the parameters $(p,t,z)\in \Sp^1\times [0,1)\times \D$. 		

\subsection*{Cap map}
Girouard and Laugesen \cite[Section 3]{GirLau2021} constructed a particular conformal map $G_C:C \to \D$, called a ``cap map'', such that $G_C$ converges locally uniformly to the identity as $t \to 1$, that is, as the cap $C$ expands to fill the whole disk. 

\subsection*{Trial functions}
Let $\alpha \in \R$. For the rest of the paper, 
\[
v=g(r)e^{i\theta}
\]
is the complex-valued Robin eigenfunction on the unit disk corresponding to $\lambda_2(\D;\alpha/4\pi)$; see \autoref{basic2}, where it is observed that $g(1)>0$. The Robin parameter here is $\alpha/4\pi$. 

Fix a conformal map $B : \Omega \to \D$. 
%
%
Given a hyperbolic cap $C$ and $w \in \overline{\D}$, define the trial function 
\[
u_{w,C} : \Omega \to \C
\]
by
\begin{equation} \label{eq:trialfn}
u_{w,C}=v\circ M_w \circ G_C \circ F_C \circ B,
\end{equation}
as shown schematically in \autoref{fig:Trial}. This function $u_{w,C}(z)$ is continuous as a function of $z \in \Omega$, is bounded by the maximum value of the radial part $|v|=g$, is smooth except along the preimage under $B^{-1}$ of the cap boundary, and belongs to $H^1(\Omega)$ by conformal invariance of the Dirichlet integral. 

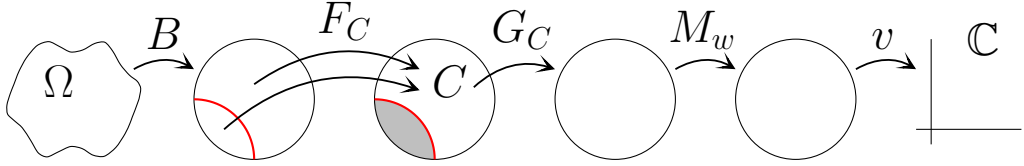
\begin{figure}
\begin{tikzpicture}[scale=0.8]

\def\omegaoffset{-6}
\def\omegasize{0.62}
\draw plot[smooth,samples=36,domain=0:360,variable=\t] ({\omegaoffset+\omegasize*0.35*(1+0.05*sin(6*\t))*(5*cos(\t))},{\omegasize*1.5*(1+0.08*cos(6*\t)+0.08*cos(6*\t))*(sin(\t))});

 \begin{scope}
    \clip (0,0) circle (1.0);
    \draw[draw=none,fill=gray!50] (-1,-1) circle (1.0);
\end{scope}

\draw (-3,0) circle (1.0);
\draw (0,0) circle (1.0);
\draw (3,0) circle (1.0);
\draw (6,0) circle (1.0);

\draw[red,thick,domain=0:90] plot ({cos(\x)-1}, {sin(\x)-1});
\draw[red,thick,domain=0:90] plot ({cos(\x)-4}, {sin(\x)-1});

\draw[-] (8,-0.5) -- (9.75,-0.5);
\draw[-] (8.25,-0.75) -- (8.25,1) node[right] {\Large \hspace*{4pt} $\mathbb C$};

\myarrowL{(-5,0.5)}{(-4,0.5)};
\myarrowL{(-3,0.25)}{(-0.25,0.5)};
\myarrowL{(-3.5,-0.5)}{(-0.25,0.15)};
\myarrowL{(0.65,0.2)}{(2,0.5)};
\myarrowL{(4,0.5)}{(5,0.5)};
\myarrowL{(7,0.5)}{(8,0.5)};

\filldraw  (-4.5,0.65) circle (0pt) node[above] {\Large $B$};
\filldraw  (-1.5,0.8) circle (0pt) node[above] {\Large $F_C$};
\filldraw  (1.45,0.65) circle (0pt) node[above] {\Large $G_C$};
\filldraw  (4.45,0.65) circle (0pt) node[above] {\Large $M_w$};
\filldraw  (7.45,0.65) circle (0pt) node[above] {\Large $v$};

\filldraw  (-6.25,0.3) circle (0pt) node {\Large $\Omega$};
\filldraw  (0.25,0.3) circle (0pt) node {\Large $C$};

\end{tikzpicture}

\caption{\label{fig:Trial} The trial function $u_{w,C}$ on $\Omega$ is constructed by precomposing the (complex-valued) second Robin eigenfunction $v$ on the disk with four transformations: map conformally from $\Omega$ to the disk, fold onto the cap $C$, expand the cap to the whole disk, apply a M\"{o}bius map of the disk, and finally evaluate $v$. The cap $C$ and M\"{o}bius parameter $w$ will be chosen to ensure orthogonality of the trial function to the first and second Robin eigenfunctions on $\Omega$.}
\end{figure}

Further, the trial function depends continuously on its parameters:
\begin{lemma}[Continuous dependence of trial function; \protect{\cite[Lemma 5.2]{GirLau2021}}] \label{lemma:continuousTrialFunction}
The function $u_{w,C}(z)$ depends continuously on $(C,w,z)$, in other words, on $(p,t,w,z)$. 
\end{lemma}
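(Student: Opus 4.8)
The plan is to prove joint continuity by peeling the composition \eqref{eq:trialfn} apart one factor at a time, working from the inside out, and checking at each stage that the factor in question is continuous jointly in the running point and the relevant parameters, with its range inside the domain of the next factor. Two of the five factors carry no parameter dependence and can be cleared away at once: $B\colon\Omega\to\D$ is a fixed conformal, hence continuous, map, so it merely transports $z\in\Omega$ to $B(z)\in\D$; and $v=g(r)e^{i\theta}$, being a Robin eigenfunction on the smooth domain $\D$, extends smoothly to $\overline{\D}$ (in particular $g(0)=0$, so that $v$ is continuous at the origin). What remains is the joint continuity of $M_w\circ G_C\circ F_C$ in $(p,t,w)$ together with its argument.

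First I would dispose of the fold map $F_C=F_{p,t}$. Its two branches — the identity on $C$ and $\tau_C=M_{-pt}\circ R_p\circ M_{pt}$ on $C^\star$ — agree on the common boundary, since $R_p$ fixes the line through $0$ perpendicular to $p$ and hence $\tau_C$ fixes the flat boundary of $C_{p,t}$ pointwise; so $F_C$ is a well-defined continuous self-map of $\overline{\D}$. Because $\tau_{p,t}$ is given by the displayed formula, which is continuous in $(p,t)\in\Sp^1\times[0,1)$, and because the splitting $\overline{\D}=C_{p,t}\cup C^\star_{p,t}$ varies continuously with $(p,t)$, the map $(p,t,\zeta)\mapsto F_{p,t}(\zeta)$ is jointly continuous on $\Sp^1\times[0,1)\times\D$; this is the continuity remark already recorded just above \autoref{lemma:continuousTrialFunction}.

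The step I expect to be the main obstacle is the cap map $G_C=G_{p,t}$, precisely because its domain $C_{p,t}$ moves with the parameters, so that ``joint continuity in $(p,t,\zeta)$'' is not literally meaningful for $G_C$ on its own. The resolution is that $F_{p,t}$ maps $\overline{\D}$ onto exactly $C_{p,t}$, so that $G_{p,t}\circ F_{p,t}$ is a map on the single fixed domain $\overline{\D}$, and it is this composite whose continuity one shows. Girouard and Laugesen build the cap map explicitly in \cite[Section 3]{GirLau2021} out of the M\"obius map $M_{pt}$ (carrying $C_{p,t}$ onto the half-disk $C_{p,0}$), a rotation (carrying $C_{p,0}$ onto a fixed model half-disk), a fixed conformal equivalence of the latter with $\D$, and a normalizing M\"obius factor; each of these depends continuously — indeed real-analytically — on $(p,t)$ throughout $\Sp^1\times[0,1)$, the point of the restriction $t<1$ being exactly that $M_{pt}$ does not then collapse to a constant. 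Reading this off, $(p,t,\zeta)\mapsto G_{p,t}(F_{p,t}(\zeta))$ is jointly continuous on $\Sp^1\times[0,1)\times\D$, with values in $\overline{\D}$.

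Finally I would fold in $M_w$ and then $v$, using the rational formula $M_w(\zeta)=(\zeta+w)/(\zeta\overline{w}+1)$. For $w\in\D$ its denominator is bounded below by $1-|w|>0$ on all of $\overline{\D}$, so $(w,\zeta)\mapsto M_w(\zeta)$ is jointly continuous on $\D\times\overline{\D}$; composing with the previous step and with the continuous $v$ gives joint continuity of $u_{w,C}(z)$ on $\Sp^1\times[0,1)\times\D\times\Omega$. Letting $w\to w_0\in\partial\D$, one checks (from the identity $\zeta+w_0=w_0(\zeta\overline{w_0}+1)$ valid for $|w_0|=1$) that $M_w\to w_0$ locally uniformly on $\D$, while $G_C\circ F_C\circ B$ carries all of $\Omega$ except the lower-dimensional preimage of the cap boundary into $\D$; so continuity persists up to $|w|=1$ away from that preimage. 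Along the preimage the trial function is in any case only continuous, not smooth, in $z$, and for $|w|=1$ it degenerates to the constant $v(w)$ — a subtlety that is immaterial for the way the lemma is used, since $u_{w,C}$ subsequently enters only through the integrals $\int_\Omega u_{w,C}f_j\,dA$. This completes the plan.
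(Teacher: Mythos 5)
The paper does not actually prove this lemma: it is imported verbatim from Girouard and Laugesen, cited as \cite[Lemma 5.2]{GirLau2021}, and no proof appears in the present text. So there is no in-paper argument to compare against; what follows is an assessment of your sketch on its own terms.

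For $w$ in the \emph{open} disk $\D$ your plan is sound, and the key organizational point is the right one: the cap map $G_C$ by itself lives on a moving domain, so one proves continuity of the composite $G_{p,t}\circ F_{p,t}$ on the fixed domain $\overline{\D}$, which, together with the fixed maps $B$ and $v$ and the jointly continuous $M_w$, gives joint continuity of $u_{w,C}(z)$ on $\Sp^1\times[0,1)\times\D\times\Omega$.

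Where your argument does not close is the boundary regime $|w|=1$, which the lemma must cover since the trial function is defined for $w\in\overline{\D}$ and the vector field $V(w,p,t)$ in \autoref{sec:orthogonality} is evaluated precisely at $w\in\partial\D$ in \eqref{Vboundary}. You correctly note that $M_w\to w_0$ only \emph{locally} uniformly on $\D$ as $w\to w_0\in\partial\D$; the map $(w,\zeta)\mapsto M_w(\zeta)$ is genuinely discontinuous at $(w_0,-w_0)$, since writing $w=w_0+\varepsilon$ gives $M_w(-w_0)=-\overline{w_0}\,\varepsilon/\overline{\varepsilon}$, whose limit depends on the direction of approach. The set where $G_C\circ F_C\circ B$ reaches $\partial\D$ is exactly the $B$-preimage of the cap geodesic, and $G_C$ sends that geodesic onto an arc of $\partial\D$; whether the problematic point $-w_0$ can lie on that arc, for the particular normalization of $G_C$ chosen in \cite[Section 3]{GirLau2021}, is precisely what determines whether pointwise joint continuity survives there, and your sketch does not address it. Your closing remark --- that the issue is ``immaterial for the way the lemma is used'' because $u_{w,C}$ only enters through integrals --- is a correct observation about the downstream application (dominated convergence only needs boundedness and a.e.\ convergence), but it proves continuity of $V$, not the pointwise joint continuity that the lemma asserts. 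To actually establish the stated lemma one must either confirm from the explicit cap-map construction that the bad point is never attained, or restrict the statement to $w\in\D$ and handle $|w|=1$ separately at the level of the integrals, as the present paper effectively does via \eqref{Vboundary} and dominated convergence.
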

As the cap expands to the whole disk, the fold and cap maps drop out of the formula completely, yielding a limiting value at $t=1$ that is independent of the cap direction $q$: 
\begin{lemma}[Extension of trial function to large caps; \protect{\cite[Lemma 5.3]{GirLau2021}}] \label{lemma:extension}
The function $u_{w,C}(z)$ with $C=C_{p,t}$ extends to $t=1$ as follows: 
\[
    u_{w,C} \to v \circ M_{\widetilde{w}} \circ B \qquad \text{as $w \to \widetilde{w} \in \overline{\D}$, \ $p \to q \in \Sp^1, \ t \to 1$,} 
\]
with locally uniform convergence on $\Omega$. 
\end{lemma}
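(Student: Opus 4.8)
The plan is to unwind the composition $u_{w,C} = v \circ M_w \circ G_C \circ F_C \circ B$ one factor at a time, working over an arbitrary compact set $K \subset \Omega$ and tracking the image of $K$ under each successive map. Since $B$ is a fixed conformal map, $B(K)$ is a compact subset of $\D$, so the claim reduces to proving that $v \circ M_w \circ G_C \circ F_C \to v \circ M_{\widetilde w}$ uniformly on $B(K)$ as $w \to \widetilde w$, $p \to q$, $t \to 1$.

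The first step is the geometric observation that the caps expand uniformly in direction: for every compact $K' \subset \D$ there exists $t_0 \in [0,1)$ such that $K' \subset C_{p,t}$ for all $p \in \Sp^1$ and all $t \in [t_0, 1)$. This follows from the explicit description $C_{p,t} = M_{-pt}(C_{p,0})$ together with rotational symmetry --- a rotation sending $p$ to a fixed reference direction conjugates $C_{p,t}$ to a cap in that fixed direction, reducing the statement to a single $p$, where it is the assertion recalled in \autoref{sec:mobiuscaps} that $C_{p,t}$ fills the whole disk as $t \to 1$. Applying this with $K' = B(K)$, we get that once $t$ is close enough to $1$ the fold map $F_C$ is the identity on $B(K)$, because $F_C = \mathrm{id}$ on $C$. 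Hence $G_C \circ F_C \circ B = G_C \circ B$ on $K$ for $t$ near $1$.

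Next, the cap map satisfies $G_C \to \mathrm{id}$ locally uniformly on $\D$ as $t \to 1$, by its construction in \cite[Section 3]{GirLau2021}, so the restrictions of $G_C$ to the compact set $B(K)$ converge uniformly to the inclusion; in particular, for $t$ sufficiently close to $1$ the images $G_C(B(K))$ all lie in a fixed compact set $K'' \subset \D$ (a small closed neighborhood of $B(K)$). On $K''$ the M\"obius maps $M_w$ converge uniformly to $M_{\widetilde w}$ as $w \to \widetilde w \in \overline{\D}$: for $\widetilde w \in \D$ this is routine, and for $\widetilde w \in \partial\D$ it is the statement that $M_w(z) \to \widetilde w$ uniformly for $z \in K''$, which one reads off $M_w(z) = (z+w)/(z\overline w + 1)$ since the denominator is bounded away from $0$ on $K''$. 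Finally $v = g(r)e^{i\theta}$ is continuous, hence uniformly continuous, on $\overline{\D}$, so precomposing $v$ with the uniform convergence $M_w \circ G_C \circ F_C \circ B \to M_{\widetilde w} \circ B$ on $K$ preserves uniformity and yields $u_{w,C} \to v \circ M_{\widetilde w} \circ B$ uniformly on $K$, as desired.

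The step I would treat most carefully is the degenerate boundary case $\widetilde w \in \partial\D$, where $M_{\widetilde w}$ collapses the disk to the single point $\widetilde w$ and the limiting trial function $v \circ M_{\widetilde w} \circ B$ is constant. There the argument only works because we arranged the intermediate images $G_C(B(K))$ to stay inside a \emph{fixed} compact subset $K'' \subset \D$, rather than merely inside $\D$; otherwise the convergence $M_w \to M_{\widetilde w}$ could fail near $\partial\D$. A secondary point to verify is the uniformity in $p$ of the cap expansion, so that letting $p \to q$ does not spoil the estimates --- this is exactly what the rotational-symmetry reduction in the first step delivers.
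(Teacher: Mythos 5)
The paper does not prove this lemma; it imports it wholesale from \cite[Lemma 5.3]{GirLau2021}. Your reconstruction is the natural one and is essentially correct: you unwind the composition one factor at a time, use the uniform-in-$p$ expansion of the caps to kill the fold map on a compact set, use $G_C \to \mathrm{id}$ to control the cap map, verify the uniform convergence $M_w \to M_{\widetilde w}$ on a fixed compact subset of $\D$ (with the correct treatment of the degenerate case $\widetilde w \in \partial\D$, where the denominator $z\overline w + 1$ is bounded below by $1 - \max_{K''}|z| > 0$ independently of $w$), and finish with uniform continuity of $v$ on $\overline\D$.

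One small point deserves to be made explicit. You invoke rotational symmetry to make the cap expansion $C_{p,t} \nearrow \D$ uniform in $p$, and at the end attribute the uniformity in $p$ ``to the rotational-symmetry reduction in the first step.'' But you also need the convergence $G_{C_{p,t}} \to \mathrm{id}$ (locally uniformly in $z$) to be uniform in $p$, and that is a separate statement from the cap expansion; as written you only cite \cite[Section 3]{GirLau2021} for a fixed-$p$ convergence. The fix is the same: the cap maps satisfy the rotational covariance $G_{C_{e^{i\alpha}p,t}}(z) = e^{i\alpha}\,G_{C_{p,t}}(e^{-i\alpha}z)$, which reduces the $p$-uniformity to the single-$p$ statement. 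Worth a sentence, since otherwise letting $p \to q$ simultaneously with $t \to 1$ could, in principle, spoil the choice of the compact set $K''$ on which you then control $M_w$.
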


\section{\bf Orthogonality of trial functions}
\label{sec:orthogonality}

Denote by $f_1$ and $f_2$ the first and second eigenfunctions (real-valued) of the Robin Laplacian on $\Omega$, corresponding to eigenvalues $\lambda_1(\Omega;\alpha/L)$ and $\lambda_2(\Omega;\alpha/L)$, where $\alpha \in \R$. As explained in \autoref{sec:mainproof}, the task for proving \autoref{th:main} is to show that for some $C=C_{p,t}$ and M\"{o}bius parameter $w$, the trial function $u_{w,C}$ constructed in the previous section is $L^2$-orthogonal to $f_1$ and $f_2$. In \autoref{vanish2} below, we establish the desired orthogonality. 

Since the ground state $f_1$ does not change sign, we may suppose it is positive. Define 
\[
f_* = f_2-\rho f_1 \qquad \text{where} \quad \rho = \frac{\int_{\Omega}  f_2 \,dA }{ \int_{\Omega} f_1 \,dA} ,
\]
so that $\int_\Omega f_* \, dA = 0$. Introduce a continuous vector field 
\[
V : \overline{\D} \times \Sp^{1} \times[0,1) \to \C \times \C \simeq \R^4
\]
that is defined by
\begin{align*}
& V(w,p,t) \\
& = \left( \, \langle u_{w,C} , f_1 \rangle_{L^2} \, , \langle u_{w,C} , f_* \rangle_{L^2} \, \right) \\
& = \left( \int_\Omega (v \circ M_w \circ G_C \circ F_C \circ B)(z) f_1(z) \, dA \, , \int_\Omega (v \circ M_w \circ G_C \circ F_C \circ B)(z) f_*(z) \, dA \right) .
\end{align*}
The vector field extends continuously to $t=1$ with value 
\begin{equation} \label{Vt1}
V(w, p,1) = \left( \int_\Omega (v \circ M_w \circ B)(z) f_1(z) \, dA \, , \int_\Omega (v \circ M_w \circ B)(z) f_*(z) \, dA \right) 
\end{equation}
by \autoref{lemma:extension} and dominated convergence, using that $u_{w,C}$ is dominated by the maximum value of $g$. Notice this value $V(w,p,1)$ in \eqref{Vt1} is independent of $p$ and depends only on $w$.   

Another useful property is that when $w = e^{i\theta} \in \partial \D$ the value of $V$ is nonzero and independent of $p$ and $t$, with 
\begin{equation} \label{Vboundary}
V(e^{i\theta}, p,t) = \left(  g(1) e^{i\theta} \! \int_\Omega f_1 \, dA \, , 0 \right) \neq (0,0)
\end{equation}
since $M_{e^{i\theta}}(\zeta) = e^{i\theta}$ for all $\zeta \in \D$ while $v(e^{i\theta})=g(1) e^{i\theta}$ with $g(1)>0$, and $\int_\Omega f_1 \, dA > 0$ and $\int_\Omega f_* \, dA = 0$. 

The goal is to show the vector field vanishes at some $(w,p,t)$, because $V(w,p,t)=0$ implies that $u_{w,C}$ is orthogonal in $L^2(\Omega)$ to $f_1$ and $f_*$ and hence also to $f_2$. The next proposition uses homotopy invariance of degree to show that $V$ indeed vanishes somewhere. 
\begin{proposition}[Vanishing of the vector field]\label{vanish2}
$V(w,p,t)=0$ for some $(w,p) \in \overline{\D} \times \Sp^{1}$ and $t \in [0,1]$. 
\end{proposition}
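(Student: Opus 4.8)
The plan is to deduce the proposition from the degree-theoretic lemma for reflection-symmetric sphere maps proved in \autoref{sec:degree} (Kim's variant of the Karpukhin--Stern lemma), applied to the vector field $V$ on the parameter space $\overline{\D}\times\Sp^{1}\times[0,1]$.

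First I would record the structural features of $V$ already in hand. It is continuous on $\overline{\D}\times\Sp^{1}\times[0,1)$ and extends continuously to $t=1$ by \autoref{lemma:extension} and dominated convergence, with the value \eqref{Vt1} at $t=1$ depending only on $w$. Hence the circle factor can be collapsed over the slice $\{t=1\}$: this turns $\Sp^{1}\times[0,1]$ into a $2$-disk and exhibits $V$ as a continuous map out of the $4$-ball $\overline{\D}\times\overline{\D}$. Over the remaining boundary piece $\{\,|w|=1\,\}$, formula \eqref{Vboundary} shows that $V(e^{i\theta},p,t)=\bigl(g(1)e^{i\theta}\int_\Omega f_1\,dA,\,0\bigr)$ is nonzero, independent of $p$ and $t$, and traces a circle in $\R^4=\C\times\C$ winding once around the origin of the first $\C$-factor as $\theta$ runs over $[0,2\pi]$. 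The degeneracy over $\{t=1\}$ and the explicit nonvanishing winding over $\{|w|=1\}$ are precisely the boundary data that the degree lemma converts into a zero of $V$ somewhere in the $4$-ball.

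The crux is verifying the reflection-symmetry hypothesis \eqref{eq:reflectionsymmetry} that unlocks the lemma. Here I would use that each factor of $u_{w,C}=v\circ M_w\circ G_C\circ F_C\circ B$ behaves equivariantly under reflections: the conjugation relation \eqref{eq:Mobiusconjgen} gives $M_{R_p(w)}=R_p\circ M_w\circ R_p$; the hyperbolic reflection, fold map and cap map obey the analogues $\tau_{R_p(C)}=R_p\circ\tau_C\circ R_p$, $F_{R_p(C)}=R_p\circ F_C\circ R_p$, $G_{R_p(C)}=R_p\circ G_C\circ R_p$, and reflections permute caps within the family, $R_p(C_{p,t})=C_{-p,t}$; and the radial-times-rotational shape $v=g(r)e^{i\theta}$ makes $v$ reflection-equivariant, $v\circ R=R^{\C}\circ v$ for every reflection $R$ of $\C$ fixing $0$. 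Combining these --- and using that $f_1$ and $f_*$ are real, so the target reflection passes outside the integrals defining $V$ --- one intertwines two commuting reflections of $\overline{\D}\times\Sp^{1}\times[0,1]$ with two reflections of $\R^4=\C\times\C$, which is exactly \eqref{eq:reflectionsymmetry}.

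With \eqref{eq:reflectionsymmetry} verified, the degree lemma of \autoref{sec:degree} forces $V(w,p,t)=0$ for some $(w,p)\in\overline{\D}\times\Sp^{1}$ and $t\in[0,1]$; unwinding the definition of $V$, this says $u_{w,C}$ is $L^2(\Omega)$-orthogonal to $f_1$ and to $f_*$, hence (since $f_*=f_2-\rho f_1$) also to $f_2$, which is the assertion of the proposition. I expect the genuine difficulty to lie in the reflection-symmetry step: one must choose reflections of the parameter space that are compatible with the \emph{fixed} conformal map $B:\Omega\to\D$ --- here it matters that reflections of $\D$ are Euclidean isometries, so the induced change of variables on the pushed-forward measure is controlled --- and one must check that the boundary data over $\{|w|=1\}$ and the collapsed slice $\{t=1\}$ lie in the correct fixed subspaces of the target reflections, so that the parity/degree hypothesis of the lemma is actually met.
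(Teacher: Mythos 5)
Your proposal has the right ingredients — the reflection‑symmetry verification at $t=0$ leading into Theorem~\ref{topdeg}, the observation that $V$ at $t=1$ depends only on $w$, and the nonvanishing boundary formula \eqref{Vboundary} — but it does not correctly articulate how these pieces combine, and in fact the combination you describe is not the one the argument needs. The degree lemma of \autoref{sec:degree} is a statement about self-maps of $\Sp^3$, so to invoke it one must first produce such a map. The paper does this by assuming, for contradiction, that $V$ never vanishes, then normalizing: for each fixed $t$ the quotient $(\overline{\D}\times\Sp^1)/\!\sim$ (collapsing the $\Sp^1$ fibers over $\{|w|=1\}$) is identified with $\Sp^3$ via an explicit bijection $\Psi$, and $W_t:=\widetilde V(\cdot,t)/|\widetilde V(\cdot,t)|$ is a continuous family of self-maps of $\Sp^3$. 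Theorem~\ref{topdeg} gives $\deg W_0=1$; separately, $W_1$ is not surjective because its value \eqref{Vt1} depends only on the two real parameters in $w$ while being constant on $\partial\D$, so $\deg W_1=0$; homotopy invariance of degree then supplies the contradiction. Your write-up verifies the reflection symmetry but never states this contradiction structure, and crucially never identifies the degree-$0$ computation at $t=1$ as the second half of the comparison — ``degeneracy over $\{t=1\}$'' is a hint, but you never say $W_1$ has degree $0$ nor invoke homotopy invariance of degree, so the argument as written does not close.

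There is also a geometric mismatch. You collapse $\Sp^1\times\{1\}$ to make $\Sp^1\times[0,1]$ into a $2$-disk and view $V$ as a map of the $4$-ball $\overline{\D}\times\overline{\D}$, expecting the ``boundary data'' (winding over $\{|w|=1\}$, plus the $t=1$ degeneracy) to produce a nonzero degree forcing a zero. But the boundary of that $4$-ball is not $\{|w|=1\}\cup\{t=1\}$; after your collapse, the $\{t=1\}$ slice sits in the \emph{interior}, and the boundary $3$-sphere is $\{|w|=1\}\cup\{t=0\}$. Moreover, the paper's $\Sp^3$ is a different object: it is the quotient of the $(w,p)$-slice at each fixed $t$, not the boundary of the whole parameter space, and the reflection-symmetry hypothesis of Theorem~\ref{topdeg} is verified on that slice at $t=0$ — not on the boundary sphere of $\overline{\D}\times\overline{\D}$, where it would have to somehow blend the $\{t=0\}$ and $\{|w|=1\}$ pieces coherently. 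The winding you note over $\{|w|=1\}$ is indeed what makes $W_t(a,0)=(a,0)$ (the $b=0$ case of \eqref{refsym}), so it is used, but only as an ingredient inside the reflection-symmetry hypothesis, not as an independent boundary degree computation. So: verify \eqref{eq:reflectionsymmetry} at $t=0$ exactly as you sketched, but then run the argument as a contradiction in which the family $W_t:\Sp^3\to\Sp^3$ is homotoped from a degree-$1$ map (reflection symmetry at $t=0$) to a degree-$0$ map (non-surjectivity at $t=1$).
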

The $w$ provided by the proposition cannot lie in $\partial \D$, due to \eqref{Vboundary}, and so $w \in \D$.  
\begin{proof}
We start by endowing an equivalence relation on the parameter space: say that $(w,p) \sim (w, q)$ if $w \in \partial \overline{\D}$ and $p,q \in \Sp^{1}$. Then define a bijection to the $3$-sphere by
\[
\begin{split}
\Psi : \big( (\overline{\D} \times \Sp^{1})/ \sim \big) & \to  \Sp^{3} \\
(w,p) & \mapsto (a,b) = (\sqrt{2-|w|^2} \, w, (1-|w|^2)p) 
\end{split}
\]
where it is straightforward to check that $|a|^2+|b|^2=1$. This mapping collapses the boundary points $w \in \partial \D$ onto points in $\Sp^{3}$ of the form $(a,0)$.  The inverse map has
\[
(w,p) = \Psi^{-1}(a,b) = \left( \frac{a}{\sqrt{1+|b|}}, \frac{b}{|b|} \right) 
\]
when $b \neq 0$, where $w$ really depends only on $a$ since $|b| =\sqrt{1-|a|^2}$. When $b=0$, we have $|a|=1$ and so $w=a \in \partial \D$, giving $\Psi^{-1}(a,0)=(a,p)$ with $p$ an arbitrary point in $\Sp^{1}$ (they are all equivalent under $\sim$). 

Next, write the components of the inverse map as 
\begin{equation} \label{eq:wa}
w(a) = \frac{a}{\sqrt{1+\sqrt{1-|a|^2}}} , \qquad p(b) = \frac{b}{|b|} . 
\end{equation}
Precompose the vector field $V$ with this inverse, letting 
\[
\widetilde{V}(a,b,t)=V(w(a), p(b), t), \qquad (a,b) \in \mathbb \Sp^{3}, \quad t \in [0,1] ,
\]
and noting when $b=0$ that the choice of $p(b)$ is irrelevant because then $w(a) =a = e^{i\theta} \in \partial \D$, in which case $\widetilde{V}(a,0,t)$ is independent of $p(b)$ by \eqref{Vboundary}. Thus $\widetilde{V}$ is continuous on $\Sp^3 \times [0,1]$. 

Suppose for the sake of obtaining a contradiction that $V(w,p,t)$ does not vanish. Then $\widetilde{V}$ also does not vanish. Normalize $\widetilde{V}$ by defining 
\[
W_t : \Sp^{3} \to  \Sp^{3}
\]
with  
\[
W_t(a,b)=\frac{\widetilde{V}(a,b,t)}{|\widetilde{V}(a,b,t)|} , \qquad t \in [0,1] .
\]
Formula \eqref{Vboundary} implies when $b=0$ that $W_t(a,0) = (a,0)$. 

We claim $W_1$ has degree $0$. Indeed, when $t=1$ we have 
\[
\widetilde{V}(a,b,1) = \left( \int_\Omega (v \circ M_{w(a)} \circ B)(z) f_1(z) \, dA \, , \int_\Omega (v \circ M_{w(a)} \circ B)(z) f_*(z) \, dA \right) 
\]
by \eqref{Vt1}, and this last expression is independent of $b$. Hence the continuous map $W_1 : \Sp^{3} \to  \Sp^{3}$ is not surjective, because it depends only on the $2$-dimensional parameter $a \in \overline{\D}$, is constant when $a \in \partial \D$, and is smooth when $a \in \D$. Since the range of $W_1$ omits some point of $\Sp^3$, we see $W_1$ is homotopic to a constant map and so has degree $0$. Homotopy invariance of degree implies that $W_0$ also has degree $0$. 

We will apply \autoref{topdeg} below to $W_0$ to obtain the desired contradiction, namely that $W_0$ has degree $1$. Hence we need only verify that $W_0$ satisfies the ``reflection symmetry'' hypothesis \autoref{refsym} in \autoref{topdeg}. In this task some shorthand notations are helpful, for the reflection, cap map and fold map associated with the half-disk: let $R_b=R_{p(b)}, G_b = G_{C_{p(b),0}}$ and $F_b=F_{C_{p(b),0}}$. Then for $b \neq 0$ and $t=0$, the definitions say that
\begin{align*}
&\widetilde{V}(R_b(a),-b,0) \\
&= \left( \int_{\Omega} (v \circ M_{w(R_b(a))} \circ G_{-b} \circ F_{-b} \circ B) f_1\, dA \, , \int_{\Omega} (v \circ M_{w(R_b(a))} \circ G_{-b} \circ F_{-b} \circ B) f_*\, dA \right) \\
&= \left( \int_{\Omega} (v \circ M_{R_b(w(a))} \circ R_b \circ G_b \circ F_b \circ B) f_1 \, dA \, , \int_{\Omega} (v \circ M_{R_b(w(a))} \circ R_b \circ G_b \circ F_b \circ B) f_*\, dA \right)  
\end{align*}
using $w(R_b(a)) = R_b(w(a))$ by \eqref{eq:wa} and linearity of $R_b$, and that $G_{-b}=R_b \circ G_b \circ R_b$ by \cite[formula (5.5)]{GirLau2021} and $R_b \circ F_{-b} = F_{b}$ by definition of the fold map. Since also $M_{R_b (w)} \circ R_b = R_b \circ M_w$ by \eqref{eq:Mobiusconjgen} and $v \circ R_b = R_b \circ v$ by \cite[formula (5.6)]{GirLau2021}, we deduce 
\begin{align}
& \widetilde{V}(R_b(a),-b,0) \notag \\
&= \left( R_b \! \int_{\Omega} (v \circ M_{w(a)} \circ G_b \circ F_b \circ B) f_1\, dA \, , R_b \! \int_{\Omega} (v \circ M_{w(a)} \circ G_b \circ F_b \circ B) f_*\, dA \right) \notag \\
&= (R_b \times R_b) \widetilde{V}(a,b,0). \label{eq:reflectionsymmetry}
\end{align}
The reflection $R_b$ preserves the magnitude, and so $|\widetilde{V}(R_b(a),-b,0)| = |\widetilde{V}(a,b,0)|$. Thus after dividing by these quantities, we see $W_0(R_b(a),-b) = (R_b \times R_b) W_0(a,b)$, which is the reflection symmetry condition \autoref{refsym} for $\phi=W_0$. 

Hence $W_0$ has degree $1$ by \autoref{topdeg}. This contradiction implies that $V(w,p,t)$ must vanish somewhere, completing the proof of \autoref{vanish2}.
\end{proof}

\section{\bf Degree theory result}
\label{sec:degree}

This section states and proves \autoref{topdeg}, which was used in the previous section and says that reflection-symmetric self-maps of $\mathbb{S}^3$ have degree $1$. 

The degree $\deg(\phi)$ of a continuous mapping $\phi$ from a sphere $\Sp^{N-1}$ to itself is defined to equal $d(\psi,\B^N,0)$, that is, the degree of $\psi$ at $0$ with respect to $\B^N$ where $\psi$ is any continuous extension of $\phi$ to the ball $\overline{\B^N}$; see \cite[Proposition 1.27]{FG95}.

Recall that $R_b : \R^2 \to \R^2$ is reflection across the line through the origin that is perpendicular to $b \in \R^2, b \neq 0$. Define 
\[
R_b \times R_b : \R^4 \to \R^4 
\]
by $(R_b \times R_b)(c,d) = (R_b(c),R_b(d))$ when $c,d \in \R^2$. 
\begin{theorem}[Degree of reflection-symmetric map between $3$-spheres] \label{topdeg}
Suppose $\phi: \Sp^{3} \to \Sp^{3}$ is continuous. If $\phi$ satisfies the reflection symmetry property
\begin{equation}
\label{refsym}
\begin{split}
(R_b \times R_b) \phi(a,b) &= \phi(R_b(a), -b) \qquad\text{when $b \neq 0$,} \\
\phi(a,0) &= (a,0) \qquad\qquad\quad\, \text{when $b = 0$,}
\end{split}
\end{equation}
for all $a, b \in \R^{2}$ with $(a,b) \in \Sp^{3}$, then $\textnormal{deg}(\phi) =1$.
\end{theorem}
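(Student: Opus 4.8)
The plan is to reduce the statement to a degree computation for a map between $2$-spheres by exploiting the reflection symmetry to ``collapse'' the $b$-direction, following the strategy of Petrides and of Karpukhin--Stern. The key observation is that the reflection symmetry $(R_b\times R_b)\phi(a,b)=\phi(R_b(a),-b)$, together with the normalization $\phi(a,0)=(a,0)$, constrains $\phi$ enough that its degree is forced. Concretely, I would first record the basic consequence that on the ``equatorial'' $2$-sphere $\{b=0\}$ the map is the identity, and that the reflection symmetry pairs up the two hemispheres $\{b\cdot e > 0\}$ and $\{b\cdot e<0\}$ (for a fixed unit vector $e$) in a way compatible with the antipodal-type involution $(a,b)\mapsto(R_b(a),-b)$.

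Next I would introduce a convenient extension $\psi:\overline{\B^4}\to\R^4$ of $\phi$ so that $\deg(\phi)=d(\psi,\B^4,0)$, and set up a homotopy through reflection-symmetric maps that simplifies $\phi$ without changing the degree. The cleanest route is: (i) use the reflection symmetry to show that the set $\phi^{-1}(\text{north pole})$, or more usefully the solution set of $\phi(a,b)=(a_0,b_0)$ for a generic regular value with $b_0\neq 0$, has a prescribed structure; (ii) alternatively, homotope $\phi$ to a standard model map $\phi_0$ for which the degree is visibly $1$ — a natural candidate is a suspension-type map built from the identity on the equator, which the symmetry constraints make essentially canonical. I would verify that the homotopy can be taken within the class of maps satisfying \autoref{refsym} (this is where one must check that linear interpolation, suitably renormalized onto $\Sp^3$, respects the two displayed identities in \eqref{refsym}), so that degree is preserved along the way and the endpoint computation gives $1$.

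The main obstacle I expect is precisely the bookkeeping in step (ii): producing an explicit reflection-symmetric homotopy from an arbitrary such $\phi$ to a normal form, while staying on the sphere and keeping the map well-defined (nonvanishing before renormalization) at every stage, especially near the equator $\{b=0\}$ where $R_b$ degenerates and where the constraint forces $\phi$ to be the identity. A delicate point is that $R_b$ depends on $b$ only through its direction $p(b)=b/|b|$, which is discontinuous at $b=0$, so any homotopy must be arranged to be compatible with this; the paper's earlier device of collapsing $\{w\in\partial\D\}$ to points of the form $(a,0)$ is the model for handling this, and I would mimic it. Once the homotopy is in hand, the final degree count for the model map is a short direct computation. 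The alternative, more hands-on approach — choosing a regular value, counting preimages with signs, and using the symmetry to pair them so that only the ``diagonal'' preimage on the equator survives with sign $+1$ — avoids building the homotopy but trades it for a transversality/regular-value argument that is equally delicate near $\{b=0\}$; I would present whichever of the two makes the role of the reflections most transparent, as the introduction promises.
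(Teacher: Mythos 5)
Your sketch correctly identifies the landscape (Petrides, Karpukhin--Stern, the degeneracy of $R_b$ at $b=0$, passing to an extension over the ball), but neither of the two routes you outline is carried far enough to close, and each contains a genuine gap.

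The homotopy-to-a-model route (your option (ii)) is essentially circular as stated. For self-maps of $\Sp^3$, degree classifies free homotopy classes, so ``$\phi$ is homotopic to a degree-$1$ model'' is logically equivalent to the conclusion $\deg(\phi)=1$; requiring moreover that the homotopy stay inside the class of reflection-symmetric maps is strictly stronger and would need its own proof. Nothing in \eqref{refsym} gives path-connectedness of that class, and you acknowledge this is ``where one must check'' things without indicating how the check is to be performed. The preimage-counting route (your option (i)) has a different defect: the symmetry \eqref{refsym} does \emph{not} pair preimages of a single regular value. If $\phi(a,b)=(a_0,b_0)$ then $\phi(R_b(a),-b)=(R_b(a_0),R_b(b_0))$, and $R_b$ depends on the preimage point $(a,b)$, not on the target. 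So $(R_b(a),-b)$ is generically a preimage of a \emph{different} point, and the ``pair them so only the diagonal preimage survives'' count never gets started. (Karpukhin--Stern sidestep exactly this by applying Lefschetz--Hopf to a composed map rather than counting preimages of $\phi$.)

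The paper's proof has a different shape from either sketch. Rather than homotoping $\phi$ on the sphere, it builds an explicit continuous extension $\phi_3:\overline{\B^4}\to\R^4$ in stages: extend over $\overline{\B^3}\subset\{b_2=0\}$, perturb by a small constant vector (a Sard-type argument) to eliminate interior zeros, then extend to the full ball, at each stage using \eqref{refsym} itself as the \emph{definition} of the extension on the lower half and checking continuity across $\{b=0\}$ directly via the identity normalization $\phi(a,0)=(a,0)$. The degree is then computed by excision and domain decomposition: $\overline{\B^4}$ splits into a small ball about the origin on which $\phi_3$ is the identity (contributing $+1$) plus two half-annuli $\mathcal{A}_\pm$, and a separate lemma (\autoref{changedeg}) --- proved by a \emph{local} homotopy of the restricted map in the annulus, not a homotopy of sphere maps --- shows $d(\phi_3,\mathcal{A}_+,0)+d(\phi_3,\mathcal{A}_-,0)=0$ by counting five sign-flipping reflections. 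No connectedness of the symmetric class and no regular-value transversality is needed. If you want to salvage your approach, the missing ingredient is precisely such a cancellation device; I would encourage you to look for it rather than for a global symmetric homotopy, which is likely harder than the theorem itself.
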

The reflection symmetry condition \eqref{refsym} can be interpreted as saying that $R_b \times R_b$ commutes with $\phi$, since $R_b(b)=-b$. But one must remember that the reflection operator $R_b$ depends on $b$ and so varies from point to point. 

A recent paper by Kim \cite[Theorem 11]{K24} shows that the degree of a map with reflection symmetry on $\Sp^{2n+1}$ equals $1$ for odd $n$ (in particular for $n=1$ as in the theorem here) and is odd for even $n$. The current paper improves the method by using a homotopy to avoid certain computations needed in \cite{K24}: see $\varphi_t$ later in the section, in the proof of \autoref{changedeg}. 

Karpukhin and Stern \cite[Lemma 4.2]{KS24} earlier showed that a continuous map on on $\Sp^{2n+1}$ with a similar reflection property has odd degree, by using the Lefschetz--Hopf fixed point theorem. The current paper builds on their remark that their lemma with $n=1$ should apply to the Robin problem. 

The special case $\Sp^3$ treated by \autoref{topdeg} is more readily visualized than the general case. We illustrate the essential ideas in \autoref{fig:phi2} and \autoref{fig:phi3} below. 
\begin{proof}
We aim to extend $\phi: \Sp^{3} \to \Sp^{3}$ to a continuous mapping $\phi_3 : \overline{\B^4} \to \R^4$, so that the degree of $\phi$ can be computed from the degree of $\phi_3$. The extension is accomplished in stages, building up from $3$ dimensions to $4$. For that purpose, we write  
\[
\R^3=\{ x \in \R^4 : x_4=0 \} \qquad \text{and} \qquad \B^3=\{ x \in \B^4 : x_4 = 0 \} .
\]
Similarly, $\B^2$ means $\{ x \in \B^4 : x_3 = x_4 = 0 \}$. 

Start by letting $\phi$ be the identity mapping on $\B^2$, that is, $\phi(a,0)=(a,0)$ whenever $a \in \B^2$, which is consistent with the second equation in \eqref{refsym}. Also let $\phi$ be the identity on the $3$-dimensional upper halfball $\overline{\B^3_+(1/2)}$ of radius $1/2$. (We use ``$+$'' subscripts to refer to the upper half of an object and ``$-$'' for the lower half.) In particular, $\phi(0)=0$. 

\smallskip\noindent 
\boxed{\text{Step 1 --- extend to upper half $3$-ball.}} 
In this step and the next, we aim to extend $\phi$ continuously to the $3$-dimensional ball $\overline{\B^3}$ in such a way that the extension still satisfies the reflection symmetry equation and vanishes only at the origin. We start by extending to the upper half of the ball. 

Consider the annular region $A = \B^3 \setminus \overline{\B^3(1/2)}$ and its upper half 
\[
A_+ = \B^3_+ \setminus \overline{\B^3_+(1/2)} .
\]
The boundary of $A_+$ (as a domain in $\R^3$) decomposes into three parts: $\Sp^2_+, \Sp^2_+(1/2)$, and $\{ (a,0) : a \in \overline{\B^2}, 1/2 \leq  |a| \leq 1 \}$, which we call the outer, inner, and bottom boundaries, respectively. We have already chosen $\phi$ to equal the identity on the inner and bottom boundaries, and since by hypothesis $\phi$ equals the identity on the unit circle where the outer boundary meets the inner one, we see that $\phi$ is continuous on the whole boundary $\partial A_+$. Remember also that $|\phi|=1$ on the outer boundary. 

Take $\phi_1 : \overline{\B^3_+} \to \R^4$ to be any continuous extension of $\phi$ that is smooth on $A_+$. For example, we could choose $\phi_1$ on $A_+$ to be the harmonic extension of the boundary values $\phi |_{\partial A_+}$, meaning the we extend each of the four components of $\phi$ harmonically. Since $\phi$ does not vanish on $\partial A_+$, we know the boundary image $\phi_1(\partial A_+)$ does not contain the origin. Hence $|\phi_1(\cdot)|$ attains a positive minimum value on $\partial A_+$, and so for some sufficiently small $\e>0$, the image $\phi_1(\partial A_+)$  does not intersect the ball $\overline{\B^4(\e)}$. Thus the preimage $\phi_1^{-1}(\overline{\B^4(\e)})$ is separated from the boundary, meaning there exists $\delta>0$ such that 
\begin{equation} \label{epsdelta}
x \in \overline{A_+} \quad \text{with}\quad \phi_1(x) \in \overline{\B^4(\e)} \quad \Longrightarrow \quad \dist(x,\partial A_+) \geq \delta > 0 .
\end{equation}

\smallskip\noindent 
\boxed{\text{Step 2 --- eliminate zeros and extend to lower half $3$-ball.}} 
The map $\phi_1$ might have zeros in $A_+$. To eliminate them, we perturb the map as follows and then extend to $\B^3$. See \autoref{fig:phi2}.
\begin{figure}
\includegraphics[scale=1]{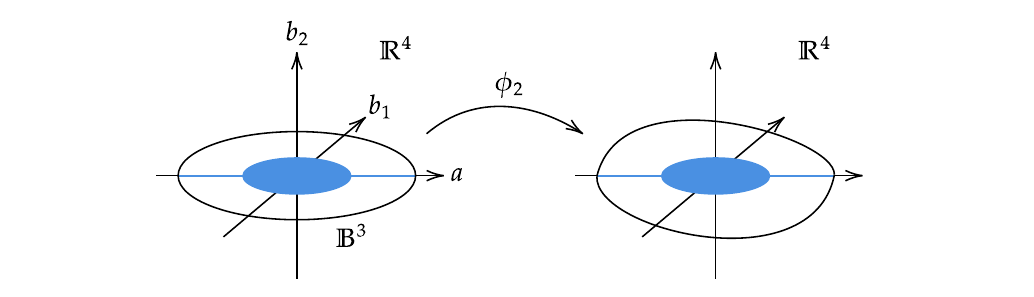}
\caption{\label{fig:phi2} Steps 1 and 2 --- extension of $\phi$ to $\phi_2$. We consider the mapping $\phi$ on the sphere $\Sp^2$ (shown here as a circle) and extend to the ball $\B^3$, mapping it into $\R^4$ while maintaining reflection symmetry. The extended map $\phi_2$ equals the identity on the sets shown in blue, namely the $3$-ball of radius $1/2$ and the lower dimensional ball $\B^2$ represented along the horizontal axis. The image of $\B^3$ generally lies outside $\R^3$, as indicated by the deformed circle on the right.}
\end{figure}

The $3$-dimensional domain $A_+$ is mapped smoothly under $\phi_1$ into $\R^4$. The image $\phi_1(A_+)$ has zero $4$-dimensional volume, and so does not contain the ball $\overline{\B^4(\e)}$. Thus we may take a point $z \in \overline{\B^4(\e)} \setminus \phi_1(A_+)$. Define a perturbed mapping $\phi_2$ on $\overline{\B^3_+}$ by 
\[
\phi_2(x) = 
\begin{cases}
\phi_1(x) & \text{when $x \in \overline{\B^3_+} \setminus A_+$,} \\
\phi_1(x) - z \dist(x,\partial A_+)/\delta & \text{when $x \in A_+$ with $\dist(x,\partial A_+)<\delta$,} \\
\phi_1(x) - z & \text{when $x \in A_+$ with $\dist(x,\partial A_+) \geq \delta$.}
\end{cases}
\] 
This perturbed mapping is continuous, since the distance function is continuous and equals $0$ on $\partial A_+$. Further, $\phi_2$ equals $\phi$ on $\overline{\B^3_+} \setminus A_+$, and in particular equals the identity on the upper halfball $\overline{\B^3_+(1/2)}$.

This construction ensures that $\phi_2(x) \neq 0$ when $x \in A_+$, as we now show. The claim is immediate if $\dist(x,\partial A_+) \geq \delta$ since then $\phi_2(x) = \phi_1(x) - z$, which is nonzero because $z \notin \phi_1(A_+)$. And if $\dist(x,\partial A_+) < \delta$ then $z \dist(x,\partial A_+)/\delta \in \B^4(\e)$, while $\phi_1(x) \notin \overline{\B^4(\e)}$ by \eqref{epsdelta}, and so 
\[
\phi_2(x) = \phi_1(x) - z \dist(x,\partial A_+)/\delta \neq 0 .
\]

Now that we have extended $\phi$ to $\phi_2$ on the upper halfball in $3$ dimensions, we may extend to the lower halfball by defining
\begin{equation} \label{phi2extended}
\phi_2(a,b) = (R_b \times R_b) \phi_2(R_b(a),-b) 
\end{equation}
when $(a,b) \in \overline{\B^3_-} \setminus \overline{\B^2}$, that is, when $(a,b) \in \overline{\B^4}, a=(a_1,a_2)$, and $b=(b_1,0)$ with $b_1 < 0$. This definition ensures that $\phi_2$ satisfies the reflection symmetry property \eqref{refsym} for all $(a,b) \in \overline{\B^3}$ with $b_1 \neq 0$ (equivalently, with $b \neq 0$). Also, $\phi_2$ equals the identity mapping on the lower halfball of radius $1/2$, because if $(a,b) \in \overline{\B^3_-(1/2)}$ then the definition gives that 
\[
\phi_2(a,b) = (R_b \times R_b) \phi_2(R_b(a),-b) = (R_b \times R_b) (R_b(a),-b) = (a,b) ,
\]
since $\phi_2$ equals the identity on the upper halfball of radius $1/2$. Similarly, $\phi_2=\phi$ on the lower half-sphere $\Sp^2_-$, since $\phi_2=\phi$ on the upper half-sphere and both $\phi_2$ and $\phi$ satisfy reflection symmetry. 

We must verify that the extended map $\phi_2$ is continuous on the set $\overline{\B^2}$ where the upper half of the $3$-ball joins the lower half. Since $b=(b_1,0)$, the reflection $R_b$ acts on $\R^2$ simply by changing the sign of the first coordinate. Thus the extended definition \eqref{phi2extended} says 
\[
\phi_2(a,b) = \Xi\big(\phi_2(-a_1,a_2,-b_1,0)\big)
\]
where 
\[
\Xi(y_1,y_2,y_3,y_4)=(-y_1,y_2,-y_3,y_4)
\]
flips the sign of the first and third coordinates in $\R^4$. Suppose $(a^\prime,b^\prime) \in \overline{\B^4}$ with $b_2^\prime=0$. If $b_1^\prime \nearrow 0$ and $a^\prime \to a \in \overline{\B^2}$ then 
\begin{align*}
\lim \phi_2(a^\prime,b^\prime) 
& = \Xi\big(\phi_2(-a_1,a_2,0,0)\big) \\
& = \Xi (-a_1,a_2,0,0) \qquad \text{since $\phi_2=\phi=\text{identity}$ on $\overline{\B^2}$} \\
& = (a_1,a_2,0,0)=\phi_2(a,0) .
\end{align*}
Thus $\phi_2$ is continuous on the joining set $\overline{\B^2}$, and so on all of $\overline{\B^3}$.

Step 2 is complete: we have found a continuous mapping $\phi_2$ of $\overline{\B^3}$ into $\R^4$ that satisfies there the reflection symmetry property \eqref{refsym}, agrees with $\phi$ on the sphere $\Sp^2$, equals the identity on $\overline{\B^3(1/2)}$, and is nonzero except at the origin. 

\smallskip\noindent 
\boxed{\text{Step 3 --- extend to $4$-ball.}} 
The next task is to extend $\phi_2$ continuously to the $4$-dimensional ball $\overline{\B^4}$ while preserving reflection symmetry and ensuring the extension equals $\phi$ on $\Sp^3$. \autoref{fig:phi3} summarizes the extension process. 
\begin{figure}
\includegraphics[scale=0.9]{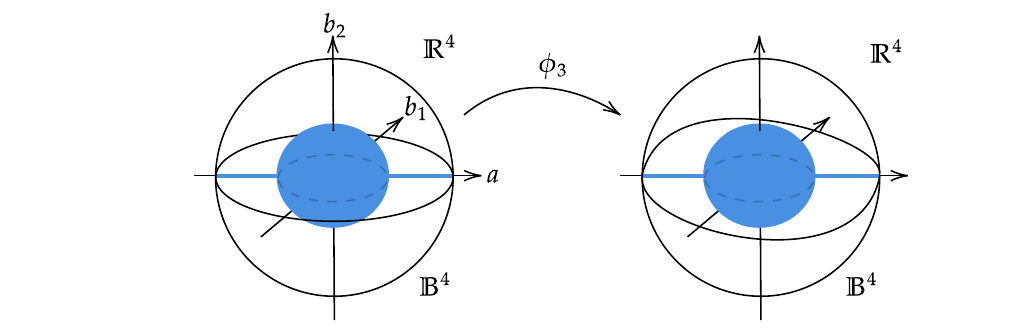}
\caption{\label{fig:phi3} Step 3 --- extension of $\phi_2$ to $\phi_3$. We consider the mapping $\phi_2$ on the sphere $\Sp^3$ and extend to the ball $\B^4$ while maintaining reflection symmetry. The extended map $\phi_3$ is the identity on the $4$-ball of radius $1/2$, shown in blue, and on the lower dimensional ball $\B^2$ represented along the horizontal axis.}
\end{figure}

First extend $\phi_2$ to be the identity map on the upper halfball $\overline{\B^4_+(1/2)}$ of radius $1/2$, and define $\phi_2$ to equal $\phi$ on $\Sp^3_+$. Consider the annular region 
\[
\mathcal{A} = \B^4 \setminus \overline{\B^4(1/2)} 
\] 
and its upper and lower halves 
\[
\mathcal{A}_+ = \B^4_+ \setminus \overline{\B^4_+(1/2)} , \qquad \mathcal{A}_- = \B^4_- \setminus \overline{\B^4_-(1/2)} .
\]

Choose $\phi_3 : \overline{\B^4_+} \to \R^4$ to be a continuous extension of $\phi_2$ to the upper halfball. For example, we could harmonically extend the components of $\phi_2$ from $\partial \mathcal{A}_+$ into the interior of $\mathcal{A}_+$. Further extend $\phi_3$ to the lower halfball $\B^4_-$ (which contains $\mathcal{A}_-$) by defining
\begin{equation} \label{refsym3}
\phi_3(a,b) = (R_b \times R_b) \phi_3(R_b(a),-b), \qquad (a,b) \in \B^4_- .
\end{equation}
This definition ensures that $\phi_3$ satisfies the reflection symmetry property \eqref{refsym} on the ball $\overline{\B^4}$. (We already knew that property on $\overline{\B^3}$, by the previous step.) Further, $\phi_3$ equals the identity mapping on the lower halfball of radius $1/2$, because if $(a,b) \in \overline{\B^4_-(1/2)}$ then 
\[
\phi_3(a,b) = (R_b \times R_b) \phi_3(R_b(a),-b) = (R_b \times R_b) (R_b(a),-b) = (a,b) ,
\]
since $\phi_3$ is the identity on $\overline{\B^4_+(1/2)}$. Similarly, reflection symmetry for $\phi_3$ and $\phi$ guarantees that $\phi_3=\phi$ on the lower half-sphere $\Sp^3_-$, since $\phi_3=\phi$ on the upper half-sphere by construction. 

We still need to verify continuity of $\phi_3$ on the set $\overline{\B^3}$ where the upper half of $\overline{\B^4}$ meets the lower half. Suppose $(a^\prime,b^\prime) \in \overline{\B^4_-} \setminus \overline{\B^3}$ with $(a^\prime,b^\prime) \to (a,b) \in \overline{\B^3}$ and $b_2^\prime \nearrow b_2=0$. We consider two cases. First, if $b_1 \neq 0$ then $b \neq 0$ and so the definition gives 
\begin{align*}
\lim \phi_3(a^\prime,b^\prime) 
& = \lim (R_{b^\prime} \times R_{b^\prime}) \phi_3 (R_{b^\prime}(a^\prime),-b^\prime) \\
& = (R_{b} \times R_{b}) \phi_3 (R_{b}(a),-b) \\
& = \phi_3(a,b) 
\end{align*}
by reflection symmetry for $\phi_3$. Second, if $b_1=0$ then $b=0$ and so 
\[
\phi_3(a,b)=\phi_3(a,0)=\phi(a,0)=(a,0)
\]
since $\phi$ is the identity on $\overline{\B^2}$ by construction. Hence
\begin{align*}
& \lim |\phi_3(a^\prime,b^\prime) - \phi_3(a,b)| \\
& = \lim |(R_{b^\prime} \times R_{b^\prime}) \phi_3(R_{b^\prime}(a^\prime),-b^\prime)-(a,0)| && \text{by \eqref{refsym3}} \\
&=\lim |\phi_3(R_{b^\prime}(a^\prime),-b^\prime)-(R_{b^\prime}(a^\prime),0)| && \text{since reflections preserve norms} \\
&=\lim |\phi_3(R_{b^\prime}(a^\prime),-b^\prime)-\phi_3(R_{b^\prime}(a^\prime),0)| && \text{since $\phi_3$ is the identity on $\overline{\B^2}$} \\
&= 0
\end{align*}
by uniform continuity of $\phi_3$ on $\overline{\B^4_+}$ (recall $-b_2^\prime>0$). Thus in both cases, $\phi_3$ is continuous at $(a,b)$.

\smallskip\noindent 
\boxed{\text{Step 4 --- compute degree by decomposition of $4$-ball.}} The ball has disjoint decomposition 
\[
\B^4 = \B^4(1/2) \cup \mathcal{A}_+ \cup \mathcal{A}_- \cup K
\] 
where
\[
K=\partial \B^4(1/2) \cup \big( \B^3 \setminus \overline{\B^3(1/2)} \big) .
\]
Notice $\phi_3$ is nonzero on $K$ by construction in Steps 2 and 3. Removing $K$ from the ball preserves the degree, by the excision property \cite[Theorem 2.7]{FG95}, and so $d(\phi_3, \B^{4},0) = d(\phi_3, \B^{4} \setminus K,0)$. Then the domain decomposition property of degree \cite[Theorem 2.7]{FG95} implies  
\begin{align*}
d(\phi_3, \B^{4},0) 
& = d(\phi_3,\B^4(1/2),0 )+d(\phi_3,\mathcal{A}_+,0 )+d(\phi_3,\mathcal{A}_-,0 ) \\
& = d(\phi_3,\B^4(1/2),0 ) \qquad \quad \text{by \autoref{changedeg} below} \\
& = 1 
\end{align*}
since $\phi_3$ equals the identity on $\B^4(1/2)$. Hence $\text{deg}(\phi, \mathbb{S}^3)=d(\phi_3, \B^{4},0 )=1$, which finishes the proof of \autoref{topdeg}. 
\end{proof}
The next lemma was used above in the proof of \autoref{topdeg}. It is a special case of \cite[Lemma 17]{K24} (take $n=1$ there). Several computations used in the proof in \cite{K24} are eliminated in the new and shorter proof below, by introducing a deformation $\varphi_t$.  
\begin{lemma}[Reflection symmetry and degrees on the half-annuli]
\label{changedeg}
Let $\mathcal{A}_{+}$ and $\mathcal{A}_{-}$ be the upper and lower annular regions of $\B^4 \setminus \overline{\B^4(1/2)}$.  If $\varphi: \overline{\mathcal{A}_{+} \cup \mathcal{A}_{-}} \to \R^{4}$ is a continuous map satisfying the reflection symmetry property \autoref{refsym} and $\varphi$ is nonzero on $\partial \mathcal{A}_{+} \cup \partial \mathcal{A}_{-}$ then the degrees on the half-annuli sum to zero:  
\begin{align*}
d(\varphi, \mathcal{A}_{+}, 0) + d(\varphi, \mathcal{A}_{-}, 0) = 0.
\end{align*}
\end{lemma}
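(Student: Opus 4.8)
The plan is to relate the two degrees $d(\varphi,\mathcal{A}_+,0)$ and $d(\varphi,\mathcal{A}_-,0)$ through the reflection symmetry, and show they are negatives of each other. The reflection symmetry property \eqref{refsym} says that $\varphi$ on $\mathcal{A}_-$ is determined by $\varphi$ on $\mathcal{A}_+$ via the rule $\varphi(a,b) = (R_b \times R_b)\,\varphi(R_b(a),-b)$, which is a composition of $\varphi|_{\mathcal{A}_+}$ with the orientation-reversing diffeomorphism $(a,b) \mapsto (R_b(a),-b)$ of $\mathcal{A}_-$ onto $\mathcal{A}_+$, followed pointwise by the linear maps $R_b \times R_b$. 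Each $R_b \times R_b$ is an orthogonal map of $\R^4$ with determinant $(\det R_b)^2 = (-1)^2 = +1$, so it is orientation-preserving; but since $R_b$ varies with the point $b$, one cannot simply pull this through the degree as a single linear factor. The standard device to handle this is a homotopy: deform the family $\{R_b\}_{b}$ of reflections, through orthogonal maps, down to a constant (or to something whose effect on degree is transparent). This is exactly the role of the deformation $\varphi_t$ advertised in the statement preceding the lemma.

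Concretely, first I would fix the geometry: since we only ever evaluate $R_b$ for $b=(b_1,0)$ lying along the horizontal axis (the set where $\mathcal{A}_-$ meets $\mathcal{A}_+$, and more generally throughout $\mathcal{A}_-$ the "$b$" coordinate is one-dimensional in the $\Sp^3$ picture — but here in the $4$-ball $\B^4$ the parameter $b=(b_3,b_4)$ ranges over a $2$-disk), I would parametrize $b$ by its angle and note that $R_b$ is rotation-conjugate to the fixed reflection flipping the first coordinate. Introduce $\varphi_t$ on $\overline{\mathcal{A}_-}$ that interpolates between $\varphi|_{\mathcal{A}_-}$ at $t=1$ and, at $t=0$, the map obtained by replacing every $R_b$ with a \emph{fixed} reflection $R_0$ (say the one perpendicular to a fixed direction). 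Because all the $R_b$ agree with $R_0$ up to rotations homotopic to the identity, this homotopy is through maps that remain nonzero on $\partial\mathcal{A}_-$ — this is where one uses that $\varphi$ itself is nonzero on $\partial\mathcal{A}_+\cup\partial\mathcal{A}_-$ and that conjugating the reflection by a rotation does not create zeros, since $|\varphi_t| = |\varphi|$ pointwise after the substitution. Homotopy invariance of the degree then gives $d(\varphi,\mathcal{A}_-,0) = d(\varphi_0,\mathcal{A}_-,0)$.

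Next I would compute $d(\varphi_0,\mathcal{A}_-,0)$ directly. With the reflections frozen at $R_0$, the map $\varphi_0$ on $\mathcal{A}_-$ is literally $(R_0 \times R_0) \circ \varphi \circ \sigma$, where $\sigma(a,b) = (R_0(a),-b)$ is an orientation-reversing diffeomorphism $\mathcal{A}_- \to \mathcal{A}_+$ (orientation-reversing because $b \mapsto -b$ on the $2$-dimensional $b$-disk has degree $+1$ while $R_0$ on the $2$-dimensional $a$-disk has degree $-1$, net $-1$; I would check the sign bookkeeping carefully here, as the dimensions of the $a$- and $b$-factors matter). By the composition/multiplicativity rules for Brouwer degree, $d(\varphi_0,\mathcal{A}_-,0) = \det(R_0 \times R_0)\cdot \deg(\sigma)\cdot d(\varphi,\mathcal{A}_+,0) = (+1)(-1)\,d(\varphi,\mathcal{A}_+,0) = -d(\varphi,\mathcal{A}_+,0)$. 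Combining with the homotopy step yields $d(\varphi,\mathcal{A}_-,0) = -d(\varphi,\mathcal{A}_+,0)$, i.e. the sum is zero, as claimed.

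The main obstacle I expect is the sign and the well-definedness of the homotopy $\varphi_t$ simultaneously: one must choose the interpolation from the varying reflections $R_b$ to the fixed $R_0$ so that (i) it stays inside the orthogonal group (so norms, and hence nonvanishing on the boundary, are preserved), and (ii) it is genuinely continuous across the equatorial set $b=0$ where $\varphi$ is pinned to the identity on $\overline{\B^2}$ — matching the earlier continuity arguments for $\phi_2$ and $\phi_3$. Getting (ii) right is the delicate point, because near $b=0$ the reflection direction $b/|b|$ is undefined, yet the reflection symmetry equation must degenerate consistently to $\varphi(a,0)=(a,0)$; the cleanest route is probably to build the homotopy on the angular variable of $b$ only and to observe it is constant (equal to the identity constraint) in a neighborhood of $b=0$, so continuity there is automatic. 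Once the homotopy is legitimate, the degree computation itself is routine multiplicativity.
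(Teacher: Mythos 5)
Your overall strategy matches the paper's: build a homotopy $\varphi_t$ that deforms the direction-dependent reflections $R_b$ to a fixed reflection, then use multiplicativity of degree to pick up the sign. The final sign count ($\det(R_0\times R_0)=+1$, $\det\sigma=-1$) is correct and agrees with the paper's count of five sign flips. However, there is a genuine gap at exactly the spot you flagged as ``the delicate point,'' and your suggested workaround does not close it.

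The difficulty is that the boundary $\partial\mathcal{A}_-$ contains points with $b=0$ (e.g.\ $(a,0)$ with $1/2\le|a|\le 1$), and the reflection direction $b/|b|$ has no limit there. Any homotopy that interpolates $R_b$ at $t=0$ towards a fixed $R_0$ will, near such points, involve reflections whose axis oscillates wildly, so joint continuity of $\varphi_t(a,b)$ in $(t,a,b)$ near $(0,a_0,0)$ is not automatic and must be argued (or sidestepped). Your proposed fix --- ``build the homotopy on the angular variable of $b$ only and observe it is constant near $b=0$'' --- does not resolve this: the homotopy cannot literally be constant near $b=0$ if it is to replace $R_b$ by $R_0$ everywhere, and if instead you insert a cut-off so that $\varphi_t=\varphi$ for small $|b|$, then the end map $\varphi_0$ is no longer the clean composition $(R_0\times R_0)\circ\varphi\circ\sigma$ and the multiplicativity argument needs further surgery. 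Also, a small but real error in your nonvanishing argument: $|\varphi_t(a,b)|$ equals $|\varphi(R_\beta(a),-b)|$, not $|\varphi(a,b)|$; the correct reason $\varphi_t\ne 0$ on $\partial\mathcal{A}_-$ is that $(a,b)\mapsto(R_\beta(a),-b)$ is an isometry sending $\partial\mathcal{A}_-$ onto $\partial\mathcal{A}_+$, where $\varphi$ is nonzero by hypothesis.

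The missing ingredient, and the key move in the paper's proof, is \emph{excision before the homotopy}. Since $\varphi$ is continuous and nonzero on the compact set where $b_2=0$, it is nonzero for $|b_2|\le\delta$ with $\delta>0$ small, so excision gives $d(\varphi,\mathcal{A}_\pm,0)=d(\varphi,\mathcal{A}_{\pm\delta},0)$ where $\mathcal{A}_{\pm\delta}$ omits the slab $|b_2|\le\delta$. On $\overline{\mathcal{A}_{+\delta}}$ one has $|b|\ge b_2\ge\delta$, so the straight-line interpolation $\beta(t,b)=(1-t)b+t(0,1)$ never vanishes (its second coordinate is $\ge\delta$), the reflections $R_{\beta(t,b)}$ are uniformly well behaved, and the homotopy is trivially continuous. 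This single step removes the whole singularity issue. (The paper also runs the homotopy on the upper truncated half-annulus rather than the lower; that is a cosmetic choice, but note the interpolation target $(0,1)$ vs.\ $(0,-1)$ must be chosen to keep $\beta\ne0$ on the half you use.) With the excision added and the norm remark corrected, your argument would go through.
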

\begin{proof}
Since $\varphi$ does not vanish where $b_2=0$ (note such points lie on the boundary of $\mathcal{A}_+$), we may choose $\delta \in (0,1)$ sufficiently small that $\varphi$ does not vanish when $|b_2| \leq \delta$. The excision property of degree allows us to truncate the half-annuli without changing the degrees, namely omitting all points with $|b_2| \leq \delta$ and concluding that $d(\varphi, \mathcal{A}_{\pm}, 0)=d(\varphi, \mathcal{A}_{\pm \delta}, 0)$ where 
\[
\mathcal{A}_{+\delta} = \{ (a,b) \in \mathcal{A}_+ : b_2>\delta \} , \qquad \mathcal{A}_{-\delta} = \{ (a,b) \in \mathcal{A}_- : b_2<-\delta \} .
\]
The goal is now to show $d(\varphi, \mathcal{A}_{+\delta}, 0) + d(\varphi, \mathcal{A}_{-\delta}, 0) = 0$.

On $\overline{\mathcal{A}_{+\delta}}$ we have $b_2 \geq \delta$ and so $b \neq 0$, and hence 
\[
\varphi(a,b) = (R_b \times R_b) \varphi(R_b(a), -b) 
\]
by the reflection symmetry property \autoref{refsym}. Let 
\[
\beta(t,b) = (1-t)b+t(0,1) \in \R^2 , \qquad t \in [0,1] ,
\]
so that $\beta(0,b)=b$ and $\beta(1,b)=(0,1)$. Clearly $\beta(t,b) \neq 0$ because its second coordinate $(1-t)b_2+t$ is greater than or equal to $\delta$. 

Define a homotopy $\varphi_t : \overline{\mathcal{A}_{+\delta}} \to \R^4$ by 
\[
\varphi_t(a,b) = (R_{\beta(t,b)} \times R_{\beta(t,b)}) \varphi(R_{\beta(t,b)}(a), -b) , \qquad (a,b) \in \overline{\mathcal{A}_{+\delta}} ,
\]
for $t \in [0,1]$, where we note that the right side is well defined since $(R_{\beta(t,b)}(a), -b) \in \overline{\mathcal{A}_{-\delta}}$. The definition ensures $\varphi_0=\varphi$. Further, $\varphi_t$ is nonzero on $\partial \mathcal{A}_{+\delta}$ since $\varphi$ is nonzero on $\partial \mathcal{A}_{-\delta}$. Hence homotopy invariance of degree \cite[Theorem 2.3]{FG95} yields that 
\[
d(\varphi_0, \mathcal{A}_{+\delta}, 0) = d(\varphi_1, \mathcal{A}_{+\delta}, 0) .
\]
On the right side of this formula, observe that  
\begin{align*}
\varphi_1(a,b) 
& = (R_{(0,1)} \times R_{(0,1)}) \varphi(R_{(0,1)}(a), -b) \\
& = (R_{(0,1)} \times R_{(0,1)}) \varphi(a_1,-a_2,-b_1,-b_2) .
\end{align*}
By the multiplicative property of degree \cite[Theorem 2.10]{FG95}, the reflections $R_{(0,1)} \times R_{(0,1)}$ on the range each multiply the degree by $-1$, and so do the reflections $a_2 \mapsto -a_2$ and $b_1 \mapsto -b_1$ that preserve the domain $\mathcal{A}_{+\delta}$ and the reflection $b_2 \mapsto -b_2$ that maps $\mathcal{A}_{+\delta}$ to $\mathcal{A}_{-\delta}$. These five reflections together multiply the degree by $(-1)^5=-1$, and so 
\[
d(\varphi_1, \mathcal{A}_{+\delta}, 0) = - d(\varphi, \mathcal{A}_{-\delta}, 0) .
\]
Combining the equalities and recalling that $\varphi_0=\varphi$, we conclude as desired that 
\[
d(\varphi, \mathcal{A}_{+\delta}, 0) = -d(\varphi, \mathcal{A}_{-\delta}, 0) .
\]

\end{proof}

\section*{Acknowledgments}
Kim is supported by an RTG grant from the National Science Foundation ({\#}2135998). Laugesen's research is supported by grants from the Simons Foundation (\#964018) and the National Science Foundation ({\#}2246537). 

\appendix

\section{\bf The Robin problem on the unit disk} \label{sec:diskproblem}

The trial function $u_{w,C}$ constructed in \eqref{eq:trialfn} involves the second Robin eigenfunction $v$ of the unit disk, whose properties we now summarize. The disk eigenfunctions satisfy
\[
\begin{split}
\Delta v + \lambda v & = 0 \quad \text{in $\D$,} \\
\partial_\nu v + \alpha v & = 0 \quad \text{on $\partial \D$,} 
\end{split}
\]
where for simplicity we do not rescale the Robin parameter by $4\pi$ like in \autoref{th:main}. The parameter range $\alpha \in [-4\pi,4\pi]$ in that theorem corresponds to $\alpha \in [-1,1]$ in this appendix. For a plot of the Robin eigenvalues of the disk as a function of $\alpha$, we recommend \cite[Figure 4.2]{H17}.

The next proposition and \autoref{Robin_g_second} are taken from \cite[Section 5]{FreiLauS2020}. 
\begin{figure}
\includegraphics[scale=0.6]{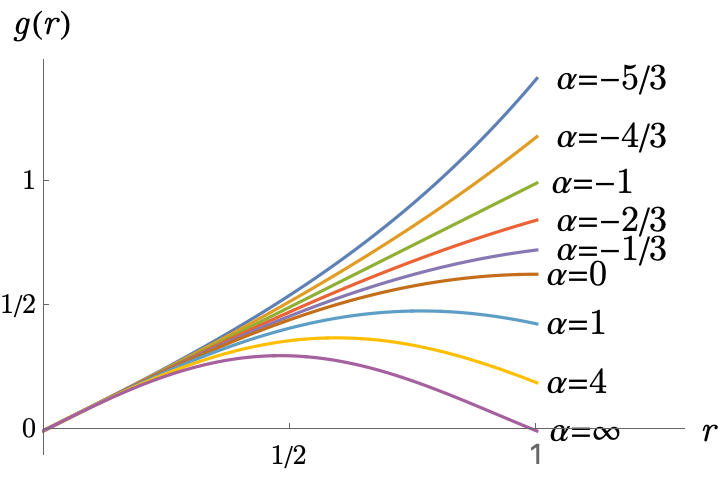}
\caption{\label{Robin_g_second}Plot of the radial part $g(r)$ of the second Robin eigenfunction $g(r) e^{i \theta}$ on the unit disk, for a range of $\alpha$-values. In terms of the $J_1$ Bessel function, $g(r)=(\text{const.})J_1(r\sqrt{\lambda_2(\D;\alpha)}\,)$ when $\alpha>-1$. \emph{Credit:} \cite[Figure 2]{FreiLauS2020}.}
\end{figure}
\begin{proposition}[Second Robin eigenfunctions of the disk]\label{basic2} Let $\alpha \in \R$. A complex-valued eigenfunction for $\lambda_2(\D;\alpha)$, can be taken in the form $v = g(r) e^{i \theta}$ where the radial part has $g(0)=0,g^\prime(0)>0,g(r)>0$ for $r \in (0,1)$, and $g(1)>0$. When $\alpha \leq 0$, $g$ is strictly increasing and $g^\prime>0$. When $\alpha > 0$, $g^\prime$ is initially positive and then negative. 

The eigenvalue $\lambda_2(\D;\alpha)$ is negative when $\alpha < -1$, equals zero at $\alpha=-1$, and is positive when $\alpha > -1$. 
\end{proposition}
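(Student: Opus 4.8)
The plan is to separate variables and push everything through the ordinary differential equation for the radial part, treating the identification of $\lambda_2(\D;\alpha)$ with an angular mode $n=1$ eigenvalue as the one step that requires real work. Writing a disk eigenfunction as $g(r)e^{in\theta}$, the radial factor solves $g''+r^{-1}g'+(\lambda-n^2/r^2)g=0$ on $(0,1)$ with $g$ bounded at $0$ and $g'(1)+\alpha g(1)=0$; the bounded solution behaves like $r^{|n|}$ at the origin, so $g(0)=0$ once $n\neq0$. The full Robin spectrum of $\D$ is the union over $n\ge0$ of the spectra of these radial problems (simple for $n=0$, doubled for $n\ge1$), and the overall ground state is the $n=0$ radial ground state, so $\lambda_2(\D;\alpha)$ is the smaller of the second $n=0$ eigenvalue and the first $n=\pm1$ eigenvalue. \emph{The main obstacle} is proving that, for every $\alpha\in\R$, the latter is strictly the smaller of the two --- this is exactly the comparison of ordinary and modified Bessel-function zeros carried out in \cite[Section~5]{FreiLauS2020}, and I would invoke it. Granting this, $\lambda_2(\D;\alpha)$ is realized by $v=g(r)e^{i\theta}$ with $g$ the ground state of the $n=1$ radial problem, which (being a ground state of a Sturm--Liouville problem) is sign-definite on $(0,1)$; after normalizing $g'(0)>0$ the behavior $g(r)\sim g'(0)r$ near the origin gives $g>0$ on $(0,1)$, and $g(1)>0$ follows since $g(1)=0$ would force $g'(1)=-\alpha g(1)=0$ and hence $g\equiv0$ by uniqueness at the regular endpoint $r=1$.

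For the monotonicity statements I would set $h=rg'$, so that $h(0^+)=0$ and, writing the radial equation in Sturm--Liouville form $(rg')'=(r^{-1}-\lambda r)g$, one gets $h'(r)=r^{-1}(1-\lambda r^2)g(r)$; since $g>0$ on $(0,1)$, the sign of $h'$ is that of $1-\lambda r^2$, while the boundary value is $h(1)=g'(1)=-\alpha g(1)$. If $\lambda\le1$ then $1-\lambda r^2>0$ on $(0,1)$, so $h$ increases from $0$ and stays positive; if $\lambda>1$ then $h$ increases on $(0,1/\sqrt\lambda)$ to a positive maximum and strictly decreases afterwards. When $\alpha\le0$ we have $h(1)\ge0$, so in either case $h>0$ on $(0,1)$, giving $g'>0$ there (with $g'(1)=-\alpha g(1)\ge0$), so $g$ is strictly increasing. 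When $\alpha>0$ we have $h(1)<0$, which already forces $\lambda>1$ (otherwise $h'>0$ on $(0,1)$ would give $h(1)\ge0$); then $h$ rises to a positive value and then strictly decreases to the negative value $h(1)$, so it changes sign exactly once, and hence so does $g'=h/r$, being first positive and then negative.

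For the sign of $\lambda_2(\D;\alpha)$ I would first note that $\alpha\mapsto\lambda_2(\D;\alpha)$ is continuous and strictly increasing, by the Hadamard variation formula $\partial_\alpha\lambda_2(\D;\alpha)=\left(\int_{\partial\D}|v|^2\,ds\right)/\left(\int_\D|v|^2\,dA\right)$ together with $\int_{\partial\D}|v|^2\,ds=2\pi g(1)^2>0$. It therefore suffices to show $\lambda_2(\D;-1)=0$. The harmonic function $v=re^{i\theta}$ has $\partial_\nu v=v$ on $\partial\D$, hence $\partial_\nu v+\alpha v=0$ at $\alpha=-1$, so $0$ is an eigenvalue there and $\lambda_2(\D;-1)\le0$. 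For the reverse inequality, a negative eigenvalue $\lambda=-\mu^2$ has radial part $I_n(\mu r)$ and boundary condition $\mu I_n'(\mu)=I_n(\mu)$, which using $\mu I_n'(\mu)=\mu I_{n+1}(\mu)+nI_n(\mu)$ becomes $\mu I_{n+1}(\mu)+(n-1)I_n(\mu)=0$; this has no positive solution for $n\ge1$ (all terms positive), while for $n=0$ it reads $\mu I_1(\mu)=I_0(\mu)$, and $\mu I_1(\mu)-I_0(\mu)$ has derivative $\mu I_1'(\mu)>0$ and runs from $-1$ at $\mu=0$ to $+\infty$, so it has a single positive root. Hence $\alpha=-1$ has exactly one negative eigenvalue (the simple ground state), so $\lambda_2(\D;-1)\ge0$ and thus $\lambda_2(\D;-1)=0$; strict monotonicity then yields $\lambda_2(\D;\alpha)<0$ for $\alpha<-1$ and $\lambda_2(\D;\alpha)>0$ for $\alpha>-1$, completing the plan.
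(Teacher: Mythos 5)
The paper does not actually prove \autoref{basic2}: the text immediately before it says ``The next proposition and \autoref{Robin_g_second} are taken from \cite[Section 5]{FreiLauS2020},'' so the proposition is imported by citation, not derived. Your proof is therefore a reconstruction rather than an alternative to something in the paper, and it invokes the same source, \cite[Section 5]{FreiLauS2020}, for the one genuinely nontrivial step (that $\lambda_2(\D;\alpha)$ is always realized by the $n=\pm1$ angular mode). Given that, your argument is correct and its supporting details are sound. The substitution $h=rg'$ with the Sturm--Liouville identity $(rg')'=(r^{-1}-\lambda r)g$ cleanly yields the monotonicity dichotomy: when $\alpha\le 0$ one gets $h>0$ on $(0,1)$ whether $\lambda\le 1$ or $\lambda>1$, and when $\alpha>0$ the boundary value $h(1)=-\alpha g(1)<0$ forces $\lambda>1$ and a single sign change of $h$, hence of $g'$. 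The uniqueness-at-$r=1$ argument for $g(1)>0$ and the two-sided argument that $\lambda_2(\D;-1)=0$ (the harmonic function $re^{i\theta}$ on one side, and the Bessel-$I_n$ boundary condition $\mu I_{n+1}(\mu)+(n-1)I_n(\mu)=0$ having a unique positive root only when $n=0$ on the other) are both correct, and strict monotonicity in $\alpha$ via the Rayleigh quotient or Hadamard formula is justified precisely because $g(1)>0$. One small remark: as you implicitly note, when $\alpha=0$ one has $g'(1)=0$, so the statement ``$g'>0$'' for $\alpha\le 0$ should be read as holding on the open interval $(0,1)$; your argument gives exactly this.
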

The second eigenvalue of the disk has multiplicity $2$ since $e^{i\theta}$ yields both a sine and cosine mode. Thus the second and third eigenvalues agree. 

The second eigenvalue can be evaluated in terms of the Bessel function $J_1$, when $\alpha>-1$. We will not need the formula in this paper, but for the sake of completeness we recall (see \cite[Section 5]{FreiLauW2021} with dimension $n=2$) that $\lambda_2(\D;\alpha)=x(\alpha)^2$ where $x(\alpha) \in (0,j_{1,1})$ is the smallest positive solution of the Robin condition $x J_1^\prime(x)/J_1(x) = - \alpha$.

\bibliographystyle{plain}

\end{document}